\newtheorem{theorem}{Theorem}[section]
\newtheorem{lemma}[theorem]{Lemma}
\theoremstyle{definition}
\theoremstyle{remark}
\newtheorem{remark}[theorem]{Remark}
\newcommand{\bfzero}{\boldsymbol 0}
\newcommand{\bfbeta}{\boldsymbol \beta}
\newcommand{\bff}{\boldsymbol f}
\newcommand{\bfn}{\boldsymbol n}
\newcommand{\bfx}{\boldsymbol x}
\newcommand{\bfe}{\boldsymbol e}
\newcommand{\bfu}{\boldsymbol u}
\newcommand{\bfv}{\boldsymbol v}
\newcommand{\bfw}{\boldsymbol w}
\newcommand{\bfV}{\boldsymbol V}
\newcommand{\bfQ}{\boldsymbol Q}
\newcommand{\bfW}{\boldsymbol W}
\newcommand{\bfpsi}{\boldsymbol \psi}
\newcommand{\mcT}{\mathcal{T}}
\numberwithin{equation}{section}
\newcommand{\jump}[1]{[\![#1]\!]}
\newcommand{\bfeta}{{\boldsymbol \eta}}
\newcommand{\bfm}{{\boldsymbol m}}
\newcommand{\calB}{\mathcal{B}}
\newcommand{\bfD}{{\boldsymbol D}_h}
\newcommand\calT{\mathcal{T}}
\newcommand\calTt{\mathcal{\tilde T}}
\newcommand\calF{\mathcal{F}}
\newcommand{\calV}{\mathcal{V}}
\newcommand{\calM}{\mathcal{M}}
\newcommand{\calE}{\mathcal{E}}
\newcommand{\bcalV}{\boldsymbol{\mathcal V}}
\newcommand\bZ{\boldsymbol{Z}}
\newcommand\bz{\boldsymbol{z}}
\newcommand\bfalpha{\boldsymbol{\alpha}}
\newcommand{\dive}{{\ensuremath\mathop{\mathrm{div}\,}}}
\newcommand{\curl}{{\ensuremath\mathop{\mathrm{curl}\,}}}
\newcommand{\pol}{\mathbb{P}}
\newcommand{\bl}{\bigl\langle}
\newcommand{\br}{\bigr\rangle}
\newcommand{\vertiii}[1]{{\left\vert\kern-0.25ex\left\vert\kern-0.25ex\left\vert #1 
    \right\vert\kern-0.25ex\right\vert\kern-0.25ex\right\vert}}
\begin{document}

\title[CIP for divergence-free FEM]{Continuous Interior Penalty stabilization for divergence-free finite element methods}

\author[G.R.~Barrenechea]{Gabriel R. Barrenechea\textsuperscript{\textdagger}}
\address{\textsuperscript{\textdagger}Department of Mathematics and
    Statistics, University of Strathclyde, 26 Richmond Street,
    Glasgow, G1 1XH United Kingdom}
\email{gabriel.barrenechea@strath.ac.uk}

\author[E.~Burman]{Erik Burman\textsuperscript{\textdaggerdbl}}
\address{\textsuperscript{\textdaggerdbl}Department of Mathematics, University College London, London, UK-WC1E  6BT, United Kingdom}
\email{e.burman@ucl.ac.uk}

\author[E.~C\'aceres]{Ernesto C\'aceres\textsuperscript{\textsection}}
\address{\textsuperscript{\textsection}Department of Mathematical Sciences,
	Worcester Polytechnic Institute,
100 Institute Road,
Worcester, MA, 01609}
\email{ecaceres@wpi.edu}

\author[J.~Guzm\'an]{Johnny Guzm\'an\textsuperscript{\textparagraph}}
\address{\textsuperscript{\textparagraph}Division of Applied Mathematics,
Brown University,
Box F,
182 George Street,
Providence, RI 02912}
\email{johnny\_guzman@brown.edu}

\maketitle

\begin{abstract}
In this paper we propose, analyze, and test numerically a pressure-robust stabilized finite element for a linearized problem in incompressible fluid mechanics, namely, the steady Oseen equation with low viscosity. Stabilization terms are defined by jumps of different combinations of derivatives for the convective term over the element faces of the 
triangulation of the domain. With the help of these stabilizing terms, and the fact the finite element space is assumed to provide a point-wise
divergence-free velocity, an $\mathcal O\big(h^{k+\frac12}\big)$ error estimate in the $L^2$-norm is proved for the method (in the convection-dominated regime), and optimal order
estimates in the remaining norms of the error. 
Numerical results supporting the theoretical findings are provided.
\end{abstract}

\smallskip
\noindent \textbf{Keywords.}  Oseen equations;
divergence-free mixed finite element methods; pressure-robustness; 
convection stabilization; continuous interior penalty.

\noindent AMS class: 65N30;  65N12;  76D07

\section{Introduction}
The design and analysis of accurate and robust computational methods for high Reynolds flow remains a
challenging topic in computational mathematics. An important difficulty is to find a dissipative mechanism that is sufficiently strong to eliminate underesolved high frequency content, and prevent unphysical transport velocities, while having minimal effect on the resolved scales.
One approach that has proven successful is to use stabilized finite element methods to counter unphysical fine scale accumulation of energy. Such stabilized finite element methods using continuous approximation spaces include the Streamline Upwind Petrov Galerkin method \cite{BH82}, the Subgrid Viscosity methods \cite{Guer99}, the Orthogonal Subscale method \cite{Cod08}, the Local Projection method \cite{BB04,BB06}
(or its residual version \cite{BV10,ABPV12}) and the Continuous Interior Penalty (CIP) method \cite{DD76,BFH06}. In the case of discontinuous approximation spaces the use of penalty on the solution jump (or upwind flux) provides a similar stabilizing mechanism \cite{JP86}.
From the theoretical point of view one may prove on linear scalar model problems that in the high P\'eclet regime such stabilized methods satisfy an $O(h^{k+\frac12})$ bound (here $k$ denotes the polynomial order of the approximation space) for the error in the $L^2$-norm in parts where the solution is smooth \cite{JNP84, JSW87, Guz06, BGL09,dFGJN19}.
This means that the method is close to optimal for smooth solutions and that rough features in the solution cannot spuriously influence the approximation in the smooth zone. It is also known that for scalar model problems the discontinuous Galerkin method using penalty on the solution jumps and the stabilized methods have superior dispersion properties compared to the standard Galerkin method \cite{MMGPS18, MCSBS22}.
In the case of the equations of incompressible flow only global estimates in the $L^2$-norm of $O(h^{k+\frac12})$ for smooth solutions have been proved in the case of equal order interpolation  \cite{JS86, HS90, BF07, GJN21}, or by adding bubble functions to the finite element space  \cite{MT15}. 
Nonwithstanding the lack of theoretical foundations in the nonsmooth case, there is ample computational evidence that stabilized methods indeed performs well also in the case of turbulent flows \cite{JWH00,GWR04,HJ07,BCCHRS07,MCSBS22}. In view of this it does not seem to far fetched to use the $O(h^{k+\frac12})$ estimate for smooth problems as a theoretical proxy for good performance for general high Reynolds flows.

Recently there has been a surge of interest in methods that allow for a pointwise divergence free approximation \cite{JLMNR17}. This means that mass conservation is satisfied exactly. It has been argued that such methods have advantages in the situation where the viscosity becomes small, however in the discussion most often the convective term has been omitted and a degenerate Stokes' problem considered instead. This is not physically realistic, since as the Reynolds number increases the convective term dominates.
Nevertheless an appealing feature of such methods is that the error bounds for the velocity are independent of the Sobolev norm on the pressure, which in general scales as $\mu^{-1}$, where $\mu$ is the viscosity. The methods are therefore said to be pressure robust. For the Oseen equation it was noticed in \cite{BL08} that the application of standard stabilized finite element methods to such pressure robust discretizations was not straightforward. In particular the fact that the approximate solution was constrained to be in the divergence free space blocked the derivation of the $O(h^{k+\frac12})$ estimate if standard stabilization methods such as the CIP method were applied. Only recently was $O(h^{k+\frac12})$
$L^2$-error estimates proven for pressure robust discretizations of incompressible flow. The first instance was for $H(\mathrm{div})$ conforming approximation spaces \cite{BBG20} where a linear inviscid problem in incompressible flow was considered, showing that earlier results for the incompressible Euler equations \cite{GSS17} indeed also enjoys the $O(h^{k+\frac12})$ estimate (see also \cite{HH21} for the time-dependent Navier-Stokes equations). In this case the upwind flux provides the stabilizing mechanism. Computational evidence of the good performance of methods using $H(\mathrm{div})$ conforming approximation for high Reynolds number flows have already been reported in \cite{SL17,SL18,SLLL18,LLS18,LLS19,GSS17,NC18}. A second approach using $H^1$-conforming approximation followed, using Galerkin Least squares stabilization in the vorticity equation \cite{BarrBurmanGuzman}, where the method was presented and analyzed for the Oseen equation. It was later shown in \cite{BHL22} that in the lowest order case of piecewise affine velocity approximation this type of stabilization could be related to the classical Smagorinsky turbulence model.  The stabilizing term in \cite{BarrBurmanGuzman} consisted in a residual term in the bulk, supplemented with jump terms on the skeleton. When the time-dependent case is considered, the fact that the bulk term is residual can constrain the choice for time discretization severely.

 A parallel development in the isogeometric community has been to introduce versions of the CIP methods where jumps of higher order derivatives are penalized over the skeleton \cite{HVABR18,HVQARB19}. In the context of the modelling of high Reynolds flows this connects with earlier ideas related to time relaxation methods \cite{BBH11}, where stability was obtained by penalizing the $L^2$-distance to a smoother approximation of the flow, hence penalizing the jumps in all derivatives.
 Apart from the obvious connection to the CIP method pointed out in \cite{BBH11}, when finite elements are used, so far there has been no numerical analysis of these high order skeleton stabilized methods. Recently such a skeleton stabilization was proposed also for divergence-conforming B-spline approximations of incompressible flows \cite{tong2022skeletonstabilized}, also in this case without theoretical justification. Interestingly, the method presented in \cite{tong2022skeletonstabilized} is closely related to the classical CIP method when $C^0$ splines (i.e., classical conforming finite element spaces) are used.

The objective of the present paper is to revisit the stabilization of \cite{BarrBurmanGuzman} and show that using the ideas of the analysis of the CIP stabilization the bulk term can be eliminated by replacing it with two skeleton based stabilization terms. This 
sheds some light on the precise quantities that need to be controlled on the faces of the mesh skeleton in order to prove improved error estimates for the velocity in a pressure-robust way. 
Our notations and analysis follow from \cite{BarrBurmanGuzman}, as both methods add stability by controlling the curl of the convective term, but significant
differences appear in the definition of the method, and its analysis.

\subsection{Outline of the paper}
The paper is organized as follows: the introduction is completed by one short section  regarding the motivation and background for the new proposed method, and 
some preliminary results about vector potentials for divergence-free functions and their regularity. In Section~\ref{sec:fem:spaces}, we introduce the 
finite element method used in this work. Additionally, we introduce abstract properties that the discrete spaces must satisfy to ensure well-posedness of the discrete problem and optimal error estimates for the velocity with respect to a norm that is induced by the discrete bilinear form. We finish Section~\ref{sec:fem:spaces} with the definition of the stabilized finite element method. In Section~\ref{sec:numerical:analysis}, we prove an error estimate for the velocity with respect to the aforementioned norm. In Section~\ref{sec:CTFEM} we make a precise choice for the finite element spaces, namely, we focus on the Scott-Vogelius element and prove the hypotheses
introduced earlier for a barycentrically refined mesh linking it to the lowest order Clough-Tocher space. We finish the paper in Section~\ref{sec:numerics} by showing numerical examples showing both convergence of the finite element solution in the case of a smooth solution, and in the stabilization properties of
the present method in the presence of sharp layers showing, in particular, that the use of CIP alone is not sufficient to stabilize layers, and justifying
the addition of the extra terms to the method.

\subsection{Background and preliminary results.}\label{sec:1.2} 

In this work we adopt standard notation for Sobolev and Lebesgue spaces, aligned, e.g., with \cite{EG21-I}. More precisely, for $D\subseteq\mathbb{R}^d, 1\le d\le 3$ will be denoted
by $(\cdot,\cdot)_D^{}$ (if $D\subseteq\mathbb{R}^{d-1}$ the inner product will be denoted
by $\langle\cdot,\cdot\rangle_D^{}$). The norm in $L^2(D)$ will be denoted by $\|\cdot\|_{0,D}^{}$, and for $m\ge 0, p\in [1,\infty]$ the norm (seminorm) in $W^{m,p}(D)$
will be denoted by $\|\cdot\|_{m,p,D}^{}$ ($|\cdot|_{m,p,D}^{}$).
 If $p=2$ as usual we denote $H^m(D)=W^{m,2}(D)$ with norm (seminorm) denoted by $\|\cdot\|_{m,D}^{}$ ($|\cdot|_{m,D}^{}$).
No distinction will be made between inner products and norms for scalar and vector (or tensor)-valued functions.

We consider a linearized version of the stationary Navier--Stokes equations, namely Oseen's problem, posed on a polyhedral, bounded, connected and Lipschitz domain $\Omega$:
\begin{align}\label{eq:oseen}
- \mu \Delta \bfu + (\bfbeta \cdot \nabla ) \bfu  + \sigma \bfu + \nabla p & = \bff \qquad \textrm{in}\;\Omega\,,\nonumber\\
\dive \bfu & = 0\qquad \textrm{in}\;\Omega\,,\\
\bfu & = \bfzero\qquad\textrm{on}\; \partial \Omega\,. \nonumber
\end{align}
Here, $\mu>0$ denotes the viscosity coefficient, $\sigma > 0$ is the reactive coefficient, and the convective term $\bfbeta$ is assumed to
satisfy $\dive \bfbeta = 0$.
The standard weak formulation of \eqref{eq:oseen} is: find $(\bfu,p)\in \bfV\times \bfQ:=H^1_0(\Omega)^d\times L^2_0(\Omega)$ such that
\begin{equation}\label{eq:weak:Oseen}
\hspace*{-0.4cm}
\left\{
\begin{array}{rl}
a(\bfu,\bfv)+b(p,\bfv) &= (\bff, \bfv )_\Omega^{}\,,\\
b(q,\bfu) &=  0\,,
\end{array}
\right.
\end{equation}
for all $(\bfv,q)\in \bfV\times \bfQ$, where the bilinear forms are defined as
\begin{align}
a(\bfu,\bfv) :=&  \,(\sigma \bfu + (\bfbeta \cdot \nabla) \bfu,\bfv)_\Omega^{} + \mu(\nabla \bfu,\nabla \bfv)_\Omega^{}\,,  \label{def-a}\\
b(q,\bfv) :=&  \,-(q, \dive\bfv)_\Omega^{}\,. \label{def-b}
\end{align}
Defining the space
\begin{equation}\label{eq:div-free-space}
\bcalV(\Omega) = \{\bfv\in H^1_0(\Omega)^d : \dive\bfv =0\;\; \textrm{in}\;\Omega\}\,,
\end{equation}
then we can also write the following pressure-independent weak formulation of Oseen's problem: Find $\bfu \in \bcalV(\Omega)$ such that for all $\bfv \in \bcalV(\Omega)$ the following holds
\begin{equation} \label{eq:weak:Oseen-reduced}
  \mu (\nabla \bfu, \nabla \bfv)_\Omega^{} + ((\bfbeta \cdot \nabla ) \bfu, \bfv)_\Omega^{}
    +  \sigma (\bfu, \bfv)_\Omega^{}  = (\bff, \bfv )_\Omega^{}.
\end{equation}
While \eqref{eq:weak:Oseen} is the formulation that is used computationally, \eqref{eq:weak:Oseen-reduced} is very helpful to carry out the analysis in some instances, as both formulations
are equivalent. The mixed formulation \eqref{eq:weak:Oseen} is well-posed in
$H^1_0(\Omega)^d \cap \bcalV(\Omega) \times L^2_0(\Omega)$ by Lax--Milgram's lemma
and Brezzi's theorem for all $\mu>0$ (for details in these last two points, see, e.g., \cite{GR86}).

\medskip

Assuming that the domain $\Omega$ is contractible and Lipschitz, 
it is known   that we can associate a potential to every divergence-free function in $\Omega$.  The space that capture the kernel of the divergence operator is 
\begin{equation*}
\begin{array}{lr}
\bZ:=\{ \bz: \text{ components of } \bz \text{ belong to } H^1(\Omega),   \curl \bz \in H_0^1(\Omega)^d\}.
\end{array}
\end{equation*}
When $d=3$, $\bz$ is a vector-valued function whereas when $d=2$,  $\bz$ is a scalar function for. Also, here when $d=2$ the $\curl$ operator maps a scalar to a vector: $\curl v=(\partial_{x_2} v, -\partial_{x_1} v)^T$. We stress the fact that the boldface notation is being used for $d=2$ and $d=3$, even when $\bz$ is a scalar for $d=2$. We have additional regularity for $\bz$ if $\bfu$ is more regular.  In particular, since $\Omega$ is contractible and Lipschitz,  if  $\bfu \in  H^r(\Omega)^d\cap H^1_0(\Omega)^d$ with $\dive \bfu=0$  then there exists $\bz \in  \bZ$  with components in $H^{r+1}(\Omega)$  \cite{Costabel} such that $\curl \bz=\bfu$ with the following bound $\|\bz\|_{H^{r+1}(\Omega)} \le C \|\bfu\|_{H^r(\Omega)}$. 

Finally, below when $d=2$ we will also need the scalar $\curl$ operator which maps a vector to a scalar: $\curl v=\partial_{x_2} v_1-\partial_{x_1} v_2$. Hence, when $d=2$ the $\curl$ operator must be interpreted appropriately, depending if it is acting on a scalar or vector-valued function.  

\section{The stabilized finite element method}
\label{sec:fem:spaces}

\subsection{Finite element spaces}\label{sec:2.1}
Let $\{\calT_h^{} \}_{h>0}^{}$ be a family of shape-regular triangulations of
$\Omega$. The elements of $\calT_h^{}$ will be denoted by $T$, with corresponding diameter $h_T^{}:=\textrm{diam}(T)$ and maximal mesh width $h=\max\{h_T^{}:T\in\calT_h^{}\}$.
Given $T\in \calT_h^{}$, we denote by $\calF_T^{}$ the set of its facets. The collection of facets from the triangulation $\calT_h^{}$ is denoted by $\calF$, and $\calF^i$ will denote the interior facets . For $F\in \calF$ we define $h_F^{}=\textrm{diam}(F)$, and denote by $|F|$ the $(d-1)$-dimensional measure of $F$. 
For a vector valued function $\bfv$ we define the tangential jumps across $F= T_1  \cap T_2$ with $T_1, T_2 \in \calT_h$  as
\begin{equation*}
\jump{\bfv \times \bfn}|_F := \bfv_1 \times \bfn_1 + \bfv_2 \times \bfn_2,
\end{equation*}
where $\bfv_i=\bfv|_{K_i}$ and $\bfn_i$ is the unit normal pointing outwards $T_i$. If $F$ is a boundary face then we define
\begin{equation*}
\jump{\bfv \times \bfn}|_F^{} := \bfv \times \bfn.
\end{equation*}
The following broken inner products for regular enough functions $u,v$ (here $\mathcal{G} \subset \calF$) will be needed
\begin{equation}\label{broken-pi}
(v,w)_h^{}:=\sum_{T\in\calT_h^{}}(v,w)_T^{}, \quad  \bl v,w \br_{\mathcal{G}}:=\sum_{F \in \mathcal{G}} \bl v,w \br_F\,  
\end{equation}
with associated induced norms $\|\cdot\|_h^{}$, $\|\cdot\|_{\mathcal{G}}^{}$, respectively.
We also consider, for all $k$, the semi-norms \[|u|_{k,h}:=\sum_{|\alpha|=k}\|D^\alpha u\|_{h}.\]

Regarding finite element spaces, we define, for $s\ge 1$, the standard piecewise polynomial Lagrange space:
\begin{equation}\label{pol-space-lg}
\bfW_h^s:=\{ \bfw \in H_0^1(\Omega)^d: \bfw|_K^{} \in \pol_s(K)^d \text{ for all }  K \in \calT_h\},
\end{equation}
and for $s\ge 0$, the $DG$ space
\begin{subequations}\label{pol-space-dg}
\begin{alignat}{1}
\bfD^s:=&\{ \bfw \in L^2(\Omega)^3: \bfw|_K^{} \in \pol_s(K)^3 \text{ for all } K \in \calT_h\}, \quad \text{when } d=3, \\
 D_h^s:=&\{ w \in L^2(\Omega): w|_K^{} \in \pol_s(K) \text{ for all } K \in \calT_h\}, \quad \text{when } d=2.
\end{alignat}
\end{subequations}
Over the triangulation $\calT_h^{}$, and for $k\ge 1$, we assume that there exist finite element spaces $\bfV_h \subset H_0^1(\Omega)^d,Q_h \subset L^2_0(\Omega)$, along with the discrete subspace of divergence-free functions arising from the primal weak formulation \eqref{eq:weak:Oseen-reduced} of Oseen's problem \eqref{eq:oseen}:
\begin{equation}\label{div-free-space-h}
\bcalV_h := \{\bfv_h \in \bfV_h: \mbox{ such
  that } \dive \bfv_h= 0 \mbox{ in } \Omega\}\,.
\end{equation}
These spaces must satisfy very similar assumptions to the ones \cite{BarrBurmanGuzman}, (below, and in the rest of the manuscript, $C$ will denote
a positive constant, independent of $h$ and the viscosity $\mu$):
\begin{itemize}
	\item[(A1)] It holds $\dive \bfV_h \subset Q_h$.
	\item[(A2)] The pair $(\bfV_h , Q_h)$ is inf-sup stable.
	\item[(A3)] The inclusions $\bfW_h^k \subset \bfV_h \subset \bfW_h^{r}$ hold for some $r, k \ge 1$.
	\item[(A4)] There exists a finite element space $\bZ_h \subset \bZ$ such that $ \curl \bZ_h=\bcalV_h$.
	\item[(A5)] Let $\mathrm{P}_h^{}$ be the orthogonal projection onto $\bZ_h^{}$ defined by
\begin{equation}\label{def:L2-projection-Zh}
(\bfv-\mathrm{P}_h^{}\bfv,\bfpsi_h^{})_\Omega^{}=0\qquad\forall\, \bfpsi_h^{}\in \bZ_h^{}\,.
\end{equation}
Then, the following error estimate holds:  for all  $\bz \in   \bZ$ with components in  $H^{k+2}(\Omega)$ and  $|\bfalpha|\le k+1$, it holds
	\begin{equation*}
		\|h^{|\bfalpha|} \partial^{\bfalpha} (\bz -\mathrm{P}_h^{}\bz)\|_{h}^{} \le C h^{k+2} \|\bz\|_{k+2,\Omega}^{}.
	\end{equation*}
\end{itemize}
In fact, (A1)-(A4) exactly appear  as assumptions in \cite{BarrBurmanGuzman}. The Assumption (A5) is slightly different than the one in \cite{BarrBurmanGuzman}. Here we use the $L^2$-projection $\mathrm{P}_h^{}$ instead of minimizing over the space.  Additionally, as our goal is to remove bulk stabilization terms and use facet-based stabilization terms instead, the following approximation property involving the spaces $\bZ_h$ and $\bfD^{r-2}$ is going to be needed for the analysis:
\begin{itemize}
	\item[(A6)] For any $\bfw_h \in \bfD^{r-2}$, it holds
	\begin{alignat*}{1}
		\inf_{\bfm_h \in \bZ_h} \| h^{3/2}(\bfw_h-\bfm_h)\|_h \le   C  (\|h^{2} \jump{\bfw_h}\|_{\calF^i}+ \|h^{3} \jump{\nabla \bfw_h}\|_{\calF^i}). 
	\end{alignat*}
\end{itemize}


\subsection{The discrete method}\label{FEM:method}
The finite element method analyzed in this work reads as follows: find $(\bfu_h^{},p_h^{}) \in \bfV_h^{}\times Q_h^{}$ such that
\begin{equation}\label{eq:FEMstab}
\hspace*{-0.4cm}
\left\{
\begin{array}{rll}
a(\bfu_h^{},\bfv_h^{}) + b(p_h^{},\bfv_h^{})+S(\bfu_h^{},\bfv_h^{}) =&  (\bff,\bfv_h^{})_\Omega \quad & \forall \, \bfv_h \in  \bfV_h\,, \\
b(q_h^{}, \bfu_h^{})=& 0  \quad & \forall \, q_h \in Q_h\,,
\end{array}
\right.
\end{equation}
where the bilinear forms $a$ and $b$ are defined in \eqref{def-a} and \eqref{def-b}, respectively,
and the stabilizing bilinear form $S$ is given by
\begin{equation}
	\label{eq:defS}
	S(\bfu_h,\bfv_h):=\|\bfbeta\|_{0,\infty,\Omega}^{-1} \sum_{j=1}^3\delta_j S_j(\bfu_h,\bfv_h),
\end{equation}
where $S_1$, $S_2$ and $S_3$ are given by
\begin{align}
S_1(\bfu_h,\bfv_h)&:= \bl h^2 \jump{(\bfbeta \cdot \nabla)\bfu_h^{}\times \bfn },\jump{(\bfbeta \cdot \nabla) \bfv_h^{} \times \bfn}\br_{\calF^i} \label{eq:def_s1},\\
S_2(\bfu_h,\bfv_h)&:= \bl h^4 \jump{\calB \bfu_h},\jump{\calB \bfv_h} \br_{\calF^i},\text{ and}\label{eq:def_s2}\\
S_3(\bfu_h,\bfv_h)&:=\bl h^6 \jump{\nabla \calB \bfu_h},\jump{\nabla \calB \bfv_h} \br_{\calF^i}, \label{eq:def_s3}
\end{align}
and the stabilization parameters $\delta_1$, $\delta_2$ and $\delta_3$ are nondimensional and will be set later on. For equations $\eqref{eq:def_s2}$ and $\eqref{eq:def_s3}$ we define $(\calB \bfw)|_T:=\curl \big((\bfbeta \cdot \nabla) \bfw\big)|_T$ for each $T\in\calT_h$.
For the analysis we introduce the following mesh-dependent norm
\begin{equation}\label{norm}
\vertiii{\bfv}^2 := \|\sigma^{\frac12} \bfv\|^2_{0,\Omega} + \|\mu^{\frac12} \nabla\bfv \|^2_{0,\Omega} + |\bfv|_S^2\,,
\end{equation}
where $|\bfv|_S^2 := S(\bfv,\bfv)$. In particular, since $\dive\bfbeta = 0$ we have that
\begin{equation}\label{elipt-a+s}
\vertiii{\bfv_h^{}}^2 =  (a+S)(\bfv_h^{},\bfv_h^{})\qquad \forall\,\bfv_h^{}\in \bfV_h^{}\,.
\end{equation}
In addition, Assumption (A2) ensures the well-posedness of Problem~\eqref{eq:FEMstab}. Moreover,  assuming that  $\bfbeta\cdot\nabla\bfu\in H^{5/2+\varepsilon}(\Omega)^d, \varepsilon>0$, method~\eqref{eq:FEMstab} is strongly consistent for smooth enough $(\bfu,p)$, this is
\begin{equation}\label{eq:gal_ortho}
\hspace*{-0.4cm}
\left\{
\begin{array}{rll}
a(\bfu-\bfu_h^{},\bfv_h^{}) +S(\bfu-\bfu_h^{},\bfv_h^{})+ b(p-p_h^{},\bfv_h^{}) =&  0 \quad & \forall \, \bfv_h \in  \bfV_h\,, \\
b(q_h^{}, \bfu-\bfu_h^{})=& 0  \quad & \forall \, q_h \in Q_h\,. 
\end{array}
\right.
\end{equation}

\section{Error estimates} \label{sec:numerical:analysis}

To avoid technical diversions, in the analysis below we will assume that the family of meshes $\{\mcT_h^{}\}_{h>0}^{}$ is quasi-uniform (nevertheless, all of the results below can be
extended to the more general case by arguing locally).
In the error analysis, the following local trace
inequality will be used repetitively: there exists $C>0$ such that for any $T\in \calT_h^{}$, $F\in\calF_T^{}$, and for any $v\in H^1(T)$, it holds
\begin{equation}\label{local-trace}
\|v\|_{0,F}^{}\le C\Big(h_T^{-\frac{1}{2}}\|v\|_{0,T}^{}+h_T^{\frac{1}{2}}|v|_{1,T}^{}\Big)\,.
\end{equation}
Also, we recall the inverse inequality: for all nonnegative integers $\ell,s,m$ such that $0\le \ell\le s\le m$ and for all $q\in \mathbb{P}_m^{}(T)$ there exists $C>0$ such that
\begin{equation}\label{inverse}
|q|_{s,T}^{}\le Ch_T^{\ell-s}|q|_{\ell,T}^{}\,.
\end{equation}

Finally, as our goal is to obtain error estimates for the velocity with respect to the mesh size $h$ and the viscosity $\mu$, we will not track their dependency on $\bfbeta$, or $\sigma$. Additionally, and for simplicity, we set the stabilization parameters $\delta_j$, $j=1,2,3$ in \eqref{eq:defS} to $1$ throughout this section.

\subsection{An error estimate for the velocity}

In order to capture all the terms involving the bilinear forms $a$ and $S$, we define the following norm for a regular enough function $\bz$:
\begin{equation}\label{fancy-norm}
\| \bz\|_{\star}^2 :=\vertiii{\curl \bz}^2 + (h+\mu)\sum_{s=0}^4h^{2s-4}\|D^s\bz\|_h^2\,.
\end{equation}
Next, we state that the approximation of $\bfu$ with respect to the norm $\vertiii{\cdot}$ given by \eqref{norm} is as good as the approximation of $\bz$ with respect to the norm $\|\cdot\|_\star$ given by \eqref{fancy-norm}, which corresponds to the analogue of \cite[Theorem 6]{BarrBurmanGuzman}.

\begin{theorem}\label{thm:error_global}
Let $\bfu \in  H_0^1(\Omega)^d$ be the solution to \eqref{eq:oseen} and let $\bz$ be its corresponding potential. Let us assume in addition that $\bfbeta$ and $\bfu$ are such that $\bfbeta\cdot\nabla\bfu\in H^{5/2+\varepsilon}(\Omega)^d, \varepsilon>0$, and $\bfbeta|_K^{}\in W^{3,\infty}(K)^d$ for all $K\in\calT_h^{}$.
 Let $(\bfu_h^{},p_h^{})$ be the solution of \eqref{eq:FEMstab}. In addition, we assume  that $\bfbeta\cdot\bfn=0$ on $\partial \Omega$.  Then, the following error estimate holds for a constant $C$ independent of $h$ and $\mu$:
\begin{equation}\label{error-est-1}
\vertiii{\bfu-\bfu_h^{}} \le  C \|\bz-\mathrm{P}_h\bz\|_{\star}
\end{equation}
where $\mathrm{P}_h^{}\bz$ stands for the $L^2$ orthogonal projection onto $\bZ_h^{}$ defined in \eqref{def:L2-projection-Zh}.
\end{theorem}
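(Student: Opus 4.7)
I would follow a standard interpolation-plus-coercivity argument, with the interpolant chosen to exploit the curl/potential structure. Define $\bfi_h\bfu := \curl\mathrm{P}_h\bz$, which belongs to $\bcalV_h$ by Assumption~(A4), and decompose $\bfu-\bfu_h = \bfeta + \bfe_h$ with $\bfeta := \bfu-\bfi_h\bfu = \curl(\bz-\mathrm{P}_h\bz)$ and $\bfe_h := \bfi_h\bfu - \bfu_h$. Assumption~(A1) together with the discrete mass equation yields $\dive\bfu_h=0$ pointwise, so $\bfe_h\in\bcalV_h$. By the ellipticity identity \eqref{elipt-a+s} and the Galerkin orthogonality \eqref{eq:gal_ortho} tested with $\bfv_h=\bfe_h$---the pressure contribution drops because $b(p-p_h,\bfe_h)=0$---I get
\[
\vertiii{\bfe_h}^2 \;=\; (a+S)(\bfe_h,\bfe_h) \;=\; -(a+S)(\bfeta,\bfe_h).
\]

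The viscous, reactive, and stabilization parts of $(a+S)(\bfeta,\bfe_h)$ are handled by Cauchy--Schwarz, reducing them to $\|\mu^{1/2}\nabla\bfeta\|_{0,\Omega}$, $\|\sigma^{1/2}\bfeta\|_{0,\Omega}$ and $|\bfeta|_S$ times $\vertiii{\bfe_h}$. Local trace and inverse inequalities then convert the facet expressions appearing in $|\bfeta|_S$ into broken $L^2$-norms of derivatives of $\bz-\mathrm{P}_h\bz$ up to fourth order, precisely the quantities packaged into $\|\bz-\mathrm{P}_h\bz\|_\star$ in \eqref{fancy-norm}.

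The main obstacle is the convective term $((\bfbeta\cdot\nabla)\bfeta,\bfe_h)_\Omega$. Using $\bfeta=\curl(\bz-\mathrm{P}_h\bz)$, I would integrate by parts twice elementwise, exploiting $\dive\bfbeta=0$ and $\bfbeta\cdot\bfn=0$ on $\partial\Omega$: first transferring $\bfbeta\cdot\nabla$ off $\bfeta$, then pulling the remaining $\curl$ off $\bz-\mathrm{P}_h\bz$. This produces a bulk residual $(\bz-\mathrm{P}_h\bz,\calB\bfe_h)_h$ plus boundary contributions that reassemble on $\calF^i$ into jumps of $(\bfbeta\cdot\nabla)\bfe_h\times\bfn$ tested against $\bz-\mathrm{P}_h\bz$; after Cauchy--Schwarz and a trace estimate these are absorbed into $S_1(\bfe_h,\bfe_h)^{1/2}$. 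For the bulk residual the $L^2$-orthogonality of $\bz-\mathrm{P}_h\bz$ against $\bZ_h$ allows subtracting an arbitrary $\bfm_h\in\bZ_h$ from $\calB\bfe_h$; invoking Assumption~(A6) (after localizing the smooth coefficients of $\bfbeta$) converts the resulting best-approximation error into the jumps $\jump{\calB\bfe_h}$ and $\jump{\nabla\calB\bfe_h}$ that are precisely controlled by $S_2$ and $S_3$. The commutator $[(\bfbeta\cdot\nabla),\curl](\bz-\mathrm{P}_h\bz)$ contributes only lower-order terms involving $\nabla\bfbeta$ and first derivatives of $\bz-\mathrm{P}_h\bz$, absorbed directly into $\|\bz-\mathrm{P}_h\bz\|_\star$.

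Collecting the four bounds gives $|(a+S)(\bfeta,\bfe_h)|\le C\|\bz-\mathrm{P}_h\bz\|_\star\,\vertiii{\bfe_h}$, and the theorem follows after dividing by $\vertiii{\bfe_h}$ and applying the triangle inequality $\vertiii{\bfu-\bfu_h}\le\vertiii{\bfeta}+\vertiii{\bfe_h}$, noting that $\vertiii{\bfeta}$ is itself bounded by the right-hand side through the very definition of $\|\cdot\|_\star$. The hardest step is the convective term: the double integration by parts must be orchestrated so that the bulk residual and the three families of facet residuals match exactly the skeleton stabilizations $S_1,S_2,S_3$ together with the approximation property (A6), which is the whole motivation for this particular choice of stabilization.
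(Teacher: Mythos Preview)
Your plan is correct and follows essentially the same route as the paper: the curl-based interpolant $\curl\mathrm{P}_h\bz$, the integration-by-parts that produces the bulk term $(\bz-\mathrm{P}_h\bz,\calB\bfe_h)_h$ together with the facet term controlled by $S_1$, and the orthogonality of $\mathrm{P}_h$ combined with Assumption~(A6) to absorb the bulk into $S_2,S_3$ are exactly the paper's ingredients. Two minor remarks: (i) the commutator $[(\bfbeta\cdot\nabla),\curl](\bz-\mathrm{P}_h\bz)$ never actually appears---your two integrations by parts land directly on $(\bz-\mathrm{P}_h\bz,\calB\bfe_h)_h$ without it; (ii) the ``localization of $\bfbeta$'' is the one place that needs real care, since (A6) applies only to piecewise polynomials: the paper replaces $\bfbeta$ by its lowest-order Raviart--Thomas interpolant $\bfbeta_h$, bounds the remainder $((\calB-\calB_h)\bfe_h,\bz-\mathrm{P}_h\bz)_h$ by undoing the curl integration by parts, and then has to push the resulting jumps of $\calB_h\bfe_h$ back to jumps of $\calB\bfe_h$ via inverse and trace inequalities on $(\calB-\calB_h)\bfe_h$. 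Your choice to start from $\vertiii{\bfe_h}^2=-(a+S)(\bfeta,\bfe_h)$ rather than the paper's $\vertiii{\bfe}^2=(a+S)(\bfe,\bfu-\bfw_h)$ is a mild streamlining, as it delivers $\calB\bfe_h$ (rather than $\calB\bfe$) in the bulk and so avoids the paper's separate treatment of the $(\calB\bfeta_h,\bz-\bfpsi_h)_h$ contribution.
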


\begin{proof}
Let $\bfe = \bfu-\bfu_h^{}$, $\bfpsi_h=\mathrm{P}_{h}^{}\bz$ and set $\bfw_h^{}:=\curl \bfpsi_h^{}$. We note that $\bfw_h^{} \in \bcalV_h^{}$ and then, using the Galerkin orthogonality \eqref{eq:gal_ortho} we have 
\begin{equation}\label{the-first}
\vertiii{\bfe}^2= a(\bfe,\bfu-\bfw_h^{})+S(\bfe, \bfu-\bfw_h^{}). 
\end{equation}
We furthemore decompose $\bfe=\bfe_h+ \bfeta_h$ where $\bfe_h= \bfw_h-\bfu_h$ and $\bfeta_h=\bfu-\bfw_h$, and then bound the right-hand side of \eqref{the-first} term by term. For the rest of the proof, $\epsilon>0$ is arbitrary and will be chosen later. Using Cauchy--Schwarz's and Young's inequalities, and the definition of the norms $\vertiii{\cdot}$ and $\|\cdot\|_{\star}$, we see that
\begin{equation}\label{error-proof-3}
S(\bfe, \bfu - \bfw_h^{}) \le \epsilon \vertiii{\bfe}^2 + C \vertiii{\bfu - \bfw_h}^2 \le \epsilon \vertiii{\bfe}^2 + C \|{\bz - \bfpsi_h}\|_{\star}^2.
\end{equation}
Then, it only remains to bound $a(\bfe,\bfu - \bfw_h^{})$. By definition
\begin{equation}\label{error-proof-4}
a(\bfe, \bfu - \bfw_h^{}) =  (\sigma \bfe, \bfu - \bfw_h)_{\Omega} + (\mu \nabla\bfe,\nabla (\bfu - \bfw_h^{}))_\Omega\,+ ((\bfbeta \cdot \nabla) \bfe, \bfu - \bfw_h^{})_\Omega^{}.
\end{equation}
To bound the first and second  term on the right-hand side of \eqref{error-proof-4}, we use the Cauchy-Schwarz inequality and Young's inequality:
\begin{equation*}
(\sigma \bfe, \bfu - \bfw_h^{})_{\Omega}^{} + (\mu \nabla\bfe,\nabla (\bfu - \bfw_h^{}))_\Omega \le  \epsilon\vertiii{\bfe}^2+ C \vertiii{\bfu - \bfw_h^{}}^2.
\end{equation*}
For the last term in \eqref{error-proof-4}, we integrate by parts and obtain
\begin{alignat}{1}
 ((\bfbeta \cdot \nabla) \bfe, \bfu - \bfw_h^{})_\Omega^{}=T_1+T_2, \label{eq:res_stab}
\end{alignat}
where
\[
T_1:=\bl \jump{(\bfbeta \cdot \nabla) \bfe \times \bfn}, \bz-\bfpsi_h \br_{\calF},\quad\text{and}\quad T_2:=(\calB \bfe, \bz-\bfpsi_h)_h^{}.\]
To bound $T_1$,  since $\bfbeta \cdot \bfn = 0$ and $\bfe=0$ on $\partial\Omega$, we have $\bfbeta \cdot\nabla\bfe = \bfzero$ on $\partial\Omega$, and consequently
\[\bl \jump{(\bfbeta \cdot \nabla) \bfe \times \bfn}, \bz-\bfpsi_h^{} \br_{\calF} = \bl \jump{(\bfbeta \cdot \nabla) \bfe \times \bfn}, \bz-\bfpsi_h^{} \br_{\calF^i}^{}\,.\]
Then, we use Young's inequality and the local trace theorem \eqref{local-trace} to get
\begin{equation}
	\label{eq:est1}
	\begin{split}
 T_1= \bl \jump{(\bfbeta \cdot \nabla) \bfe \times \bfn}, \bz-\bfpsi_h \br_{\calF^i} \le & 
\,\epsilon \,\| h \jump{(\bfbeta \cdot \nabla) \bfe \times \bfn}\|_{\calF^i}^2+ C \| h^{-1} (\bz-\bfpsi_h^{})\|_{\calF^i}^2  \\
\le & \, \epsilon\,|\bfe|_S^2 +   C\,h\,\sum_{s=0}^1 h^{2s-4}\|D^s(\bz-\bfpsi_h^{})\|^2_h\\
   \le & \,\epsilon\, \vertiii{\bfe}^2 + C\|\bz-\bfpsi_h^{}\|_{\star}^2.
	\end{split}
\end{equation}
To bound $T_2$, we further decompose $\bfe:=\bfe_h^{}+ \bfeta_h^{}$, where $\bfe_h^{}=\bfw_h^{}-\bfu_h^{}$ and  $\bfeta_h^{}:=\bfu-\bfw_h^{}$. We then write
\begin{equation*}
T_2= T_3+T_4,
\end{equation*}
where 
\[T_3:= (\calB \bfeta_h^{}, \bz-\bfpsi_h^{})_h^{},\quad\text{and}\quad T_4:=(\calB \bfe_h^{}, \bz-\bfpsi_h^{})_h^{}.\]
Then, we have by using Young's inequality and taking the product rule into account that
\begin{equation*}
	\begin{split}
	T_3 & = \big(\curl\big(\big(\bfbeta\cdot\nabla)\bfeta_h^{}\big),\bz-\bfpsi_h^{}\big)_h^{}\\
		& \le h^3\big|(\bfbeta\cdot\nabla)\bfeta_h^{}\big|_{1,h}^2+h^{-3}\|\bz-\bfpsi_h^{}\|_{h}^2\\
		& \le C\|\bfbeta\|_{0,\infty,\Omega}^2h^3|\bfu-\bfw_h^{}|_{2,h}^2+C\|\bfbeta\|_{1,\infty,\Omega}^2h^3|\bfu-\bfw_h^{}|_{1,h}^2+h^{-3}\|\bz-\bfpsi_h^{}\|_{h}^2\\
		& \le C\|\bfbeta\|_{1,\infty,\Omega}^2h^3|\bz-\bfpsi_h^{}|_{3,h}^2+C\|\bfbeta\|_{1,\infty,\Omega}^2h^3|\bz-\bfpsi_h^{}|_{2,h}^2+h^{-3}\|\bz-\bfpsi_h^{}\|_{h}^2\\
		& \le C\|\bz-\bfpsi_h\|_{\star}^2\,.
	\end{split}
\end{equation*}
In order to bound $T_4$, we introduce the approximation $\bfbeta_h^{}$ of $\bfbeta$ as the lowest order Raviart-Thomas interpolant of $\bfbeta$ (see, \cite[Chapter~16]{EG21-I}),
that is, a piecewise constant function that belongs to $H(\textrm{div},\Omega)$, and that satisfies $\textrm{div }\bfbeta_h^{}=0$ in $\Omega$.
 Then, we define $\calB_h$ for any $T\in\calT_h$ by \[\calB_h \bfw|_T:=\curl \big((\bfbeta_h \cdot \nabla) \bfw|_T\big).\]Then, we decompose
\begin{equation*}
T_4:=T_5+ T_6,
\end{equation*}
where 
\[
T_5:= (\calB \bfe_h- \calB_h \bfe_h, \bz-\bfpsi_h)_h^{}\quad\text{and}\quad
T_6:=(\calB_h \bfe_h, \bz-\bfpsi_h)_h^{}.
\]
To bound $T_5$, we integrate by parts and use that on each element $T\in\calT_h^{}$, the estimate $\|\bfbeta-\bfbeta_h^{}\|_{\infty,T}\le C h_T^{}\|\bfbeta\|_{1,\infty,T}^{}$ holds, local trace and inverse estimates. Also, since we do not track the dependence of error estimates on $\sigma$, we have the estimate
 \begin{equation}\label{est-eta}
\|\bfeta_h^{}\|_h^{}=\|\bfu-\bfw_h^{}\|_h^{}=\|\curl(\bz-\bfpsi_h^{})\|_{h}^{}\le C\vertiii{\curl(\bz-\bfpsi_h^{})}\le C\|\bz-\bfpsi_h^{}\|_{\star}^{}.
\end{equation}
Then, using the above and the estimate for $\|h^{-1}(\bz-\bfpsi_h)\|_{h,\calF^i}$  already derived in \eqref{eq:est1}, we arrive at
\begin{equation*}
	\begin{split}
		T_5 & = \big((\bfbeta-\bfbeta_h^{})\cdot\nabla\bfe_h^{},\curl(\bz-\bfpsi_h^{})\big)_{h}^{}+\bl \jump{(\bfbeta-\bfbeta_h^{})\cdot\nabla\bfe_h^{}\times n},\bz-\bfpsi_h^{} \br_{\calF^i}\\
		& \le Ch\|\bfbeta\|_{1,\infty,\Omega}^{}\|\nabla\bfe_h^{}\|_{h}\|\bz-\bfpsi_h^{}\|_{\star}+Ch^2\|\bfbeta\|_{1,\infty,\Omega}\|\nabla \bfe_h^{}\|_{h,\calF^i}\|h^{-1}(\bz-\bfpsi_h^{})\|_{\calF^i}^{}\\
		& \le C\|\bfe_h^{}\|_{h}^{}\|\bz-\bfpsi_h^{}\|_{\star}^{}+Ch^{1/2}\|\bfe_h^{}\|_{h}^{}\|h^{-1}(\bz-\bfpsi_h^{})\|_{\calF^i}^{}\\
		& \le C\|\bfe_h^{}\|_{h}^{}\|\bz-\bfpsi_h^{}\|_{\star}^{}+Ch^{1/2}\|\bfe_h^{}\|_{h}^{}\|\bz-\bfpsi_h^{}\|_{\star}^{}\\
		& \le C\big(\|\bfe\|_h^{}+\|\bfeta_h^{}\|_h^{}\big)\|\bz-\bfpsi_h^{}\|_\star^{}\\
		& \le \epsilon\vertiii{\bfe}^2+C\|\bz-\bfpsi_h^{}\|_\star^2\,.
	\end{split}
\end{equation*}
To bound $T_6$ we observe that since $\bfpsi_h=\mathrm{P}_h^{}\bz$ and $\mathrm{P}_h^{}$ is the $L^2$ orthogonal projection onto $\bZ_h^{}$, we have for any $\bfm_h^{}\in\bZ_h^{}$ that
\begin{align*}
T_6&=(\mathcal B_h\bfe_h^{},\bz-\bfpsi_h^{})_h^{}\\
&=(\mathcal B_h^{}\bfe_h^{}-\bfm_h^{},\bz-\bfpsi_h^{})_h^{}\\
&\le \big\|h^{3/2}(\mathcal B_h\bfe_h^{}-\bfm_h^{})\big\|_h^{}\big\|h^{-3/2}(\bz-\bfpsi_h^{})\big\|_h^{}\,.
\end{align*}
Therefore, taking the infimum over $\bfm_h^{}\in\bZ_h^{}$, we get
\begin{equation*}
T_6 \le \inf_{\bfm_h^{} \in \bZ_h^{}} \| h^{3/2}(\calB_h \bfe_h^{}-\bfm_h^{})\|_h^{}  \|h^{-3/2} (\bz-\bfpsi_h^{})\|_h^{},
 \end{equation*}
and using that $\calB_h \bfe_h^{} \in \bfD^{r-2}$ and  Assumption (A6), we have that 
\begin{alignat*}{1}
 T_6 \le C (\|h^{2} \jump{\calB_h \bfe_h^{}}\|_{\calF^i}^{}+\|h^{3} \jump{\nabla \calB_h \bfe_h^{}}\|_{\calF^i})^{}\|h^{-3/2} (\bz-\bfpsi_h^{})\|_h^{}. 
\end{alignat*}
In order to bound the jump terms from the above estimate, we add and substract $\calB$ and use local trace inequalities. We will need to bound $|(\calB-\calB_h)\bfe_h^{}|_{\ell,h}^{}$ for $\ell=0,1,2$. To do so, we use the product rule and inverse estimates repeatedly. Since $\bfbeta_h$ is piecewise constant, we have that
\[\|(\calB-\calB_h)\bfe_h^{}\|_{h}^{}\le C\|\bfbeta\|_{1,\infty,\Omega}^{}|\bfe_h^{}|_{1,h}^{}+C\|\bfbeta-\bfbeta_h^{}\|_{0,\infty,\Omega}^{}|\bfe_h^{}|_{2,h}^{}
\le Ch^{-1}\|\bfbeta\|_{1,\infty,\Omega}^{}\|\bfe_h^{}\|_{h}^{}.\]
Similarly, we have that
\begin{equation*}
	\begin{split}
	|(\calB-\calB_h)\bfe_h|_{1,h} & \le C\|\bfbeta\|_{2,\infty,\Omega}|\bfe_h|_{1,h}+C\|\bfbeta\|_{1,\infty,\Omega}|\bfe_h|_{2,h}+C\|\bfbeta-\bfbeta_h\|_{0,\infty,\Omega}|\bfe_h|_{3,h}\\
	&\le Ch^{-2}\|\bfbeta\|_{2,\infty,\Omega}\|\bfe_h\|_{h}.
	\end{split}
\end{equation*}
More generally, we have for $\ell=0,1,2$ that
\begin{equation}
	\label{eq:prodrule}
	|(\calB-\calB_h)\bfe_h^{}|_{\ell,h}\le Ch^{-1-\ell}\|\bfbeta\|_{1+\ell,\infty,\Omega}^{}\|\bfe_h^{}\|_{h}^{}.
\end{equation}
Therefore, by local trace estimates and \eqref{eq:prodrule}, we get
\begin{equation}
	\begin{split}
		&\|h^{2} \jump{\calB_h \bfe_h^{}}\|_{\calF^i}^2+\|h^{3} \jump{\nabla \calB_h \bfe_h^{}}\|_{\calF^i}^2\\
		&\qquad\qquad \le C\|h^{2} \jump{(\calB-\calB_h) \bfe_h^{}}\|_{\calF^i}^2+C\|h^{3} \jump{\nabla (\calB-\calB_h) \bfe_h^{}}\|_{\calF^i}^2+C|\bfe_h^{}|_S^2\\
		& \qquad\qquad \le C\|h^{3/2}(\calB-\calB_h)\bfe_h^{}\|_h^2 + C|h^{5/2}(\calB-\calB_h)\bfe_h^{}|_{1,h}^2\\
		& \qquad\qquad\qquad\qquad + C\|h^{5/2}\nabla(\calB-\calB_h)\bfe_h^{}\|_h^2 + C|h^{7/2}\nabla(\calB-\calB_h)\bfe_h^{}|_{1,h}^2+C|\bfe_h^{}|_S^2\\
		&\qquad\qquad \le Ch\|\bfbeta\|_{3,\infty,\Omega}^2\|\bfe_h^{}\|_{h}^2+C|\bfe_h^{}|_S^2 \\
		&\qquad\qquad \le C\vertiii{\bfe_h^{}}\\
     		&\qquad\qquad \le C \vertiii{\bfe}+C\vertiii{\bfeta_h^{}}\\
		& \qquad\qquad \le C \vertiii{\bfe}+C\|\bz-\bfpsi_h^{}\|_{\star}^{}.
	\end{split}
\end{equation}
Therefore, using Young's inequality, we arrive at
\[T_6\le \epsilon\vertiii{\bfe}^2+C\|\bz-\bfpsi_h\|_{\star}^2.\]
Hence, inserting the bounds for $T_1,\ldots,T_6$  into \eqref{the-first} yields
\begin{alignat*}{1}
\vertiii{\bfe}^2 \le C \epsilon \vertiii{\bfe}^2 +  C  \|\bz-\bfpsi_h\|_{\star}^2.
\end{alignat*}
Taking $\epsilon$ sufficiently small and re-arranging terms finishes the proof. 
\end{proof}

As a consequence of the above result, the following estimate follows from (A5) and (A6) assuming that the solution $\bfu\in H_0^1(\Omega)^d\cap H^{k+1}(\Omega)^d$:
\begin{equation}
	\label{eq:error-est-u}
	\vertiii{\bfu-\bfu_h}\le Ch^{k}\big(h^{\frac12}+\mu^{\frac12}\big)\|\bfu\|_{k+1,\Omega},
\end{equation}
where the constant $C>0$ is independent of $h$ and $\mu$. In the convection-dominated case $\mu\le C\, h$, the above estimate gives, in particular, an $O(h^{k+\frac{1}{2}})$ error
estimate for the $L^2(\Omega)$-norm of the velocity error. In addition, this estimate is pressure robust.

\begin{remark} The need for Assumption~(A6) in the last proof appears in the bound for the term $T_6^{}$. In there, the convective field had to be approximated by a $C^1$ piecewise polynomial function, which prompted the introduction of (A6). In addition, this assumption dictates the exact shape of the extra terms added to the CIP method in order to allow for the proof of the improved error bound in Theorem~\ref{thm:error_global}.
\end{remark}

\subsection{An error estimate for the pressure}
\label{sec:pressure}
For regular enough solutions and data, we prove an error bound for the pressure. This estimate includes, in the convection-dominated regime, a superconvergence result between the discrete pressure and the orthogonal projection of the exact pressure into the finite element space for $p$. As a consequence, an optimal-order error estimate for $p$ arises, with constants independent of the viscosity $\mu$. We denote by 
$\pi_h^{}:L^2(\Omega)\to Q_h^{}$ the orthogonal projector onto $Q_h$, and prove the analogue of \cite[Theorem 8]{BarrBurmanGuzman} for the present stabilization terms.
\begin{theorem}
	\label{eq:p-ph}
	Assume the hypotheses of Theorem~\ref{thm:error_global} hold. Then, there exists
	$C>0$, independent of $h$ and $\mu$, such that
	\begin{equation}
		\label{eq:pip-ph}
		\|\pi_h^{} p-p_h^{}\|_{0,\Omega} \le C\big(1+\mu^{\frac12}+h^{\frac12})\vertiii{\bfu-\bfu_h^{}}.
	\end{equation}	
	As a consequence, the following error estimate follows for the pressure
	\begin{equation}\label{error-p-final}
	\|p-p_h^{}\|_{0,\Omega}^{}\le C\, h^k\,|p|_{k,\Omega}^{}+C\big(1+\mu^{\frac12}+h^{\frac12})\vertiii{\bfu-\bfu_h^{}}\,,
	\end{equation}
	where $C>0$ does not depend on $h$, or $\mu$.
\end{theorem}
\begin{proof}
	Since the pair $(\bfV_h^{},Q_h^{})$ is inf-sup stable due to assumption (A2), we guarantee the existence of $\bfx_h^{}\in\bfV_h^{}$ such that 
\begin{equation}\label{xh}
\dive\bfx_h^{} = \pi_h^{}p - p_h^{}\qquad\textrm{and}\qquad \|\nabla\bfx_h^{}\|_{0,\Omega}\le C\|\pi_h^{} p-p_h^{}\|_{0,\Omega}^{}\,,
\end{equation}
where $C$ depends only on $\Omega$. So, using both statements in \eqref{xh}, Assumption (A1),  the $L^2(\Omega)$-orthogonality property of $\pi_h^{}$, and
 the first equation from the Galerkin orthogonality \eqref{eq:gal_ortho} for $\bfv_h^{} = \bfx_h^{}$,  we get 
\begin{align*}
\|\pi_h^{}p-p_h^{}\|_{0,\Omega}^2 &= (\pi_h^{}p-p_h^{},\dive\bfx_h^{})_{\Omega}^{}\\
&= (p-p_h^{},\dive\bfx_h^{})_{\Omega}^{}\\
&=a(\bfu-\bfu_h^{},\bfx_h^{})+S(\bfu-\bfu_h^{},\bfx_h^{})\\
&\le C \vertiii{\bfu-\bfu_h^{}}\,\vertiii{\bfx_h^{}}-\big((\bfbeta\cdot\nabla)\bfx_h^{},\bfu-\bfu_h^{}\big)_\Omega^{}\,.
\end{align*}

	Therefore, according to \eqref{eq:pip-ph}, it only remains to estimate $\vertiii{\bfx_h^{}}$ in terms of the error $\|\pi_h^{}p-p_h^{}\|_{0,\Omega}^{}$. By the definition of the norm $\vertiii{\cdot}$ and the Poincar\'e inequality, and the fact that we dependency of $\sigma$ on the constant $C>0$ is not being tracked, we have that
	\begin{equation}\label{eq:p-phe1}C\vertiii{\bfx_h^{}} \le \big(1+\mu^{\frac12}\big)\|\pi_h^{} p-p_h^{}\|_{0,\Omega}^{}+|\bfx_h^{}|_S^{},
	\end{equation}
	so that we only have to estimate $|\bfx_h^{}|_S$ in terms of $\|\pi_h^{} p-p_h^{}\|_{0,\Omega}^{}$. Recalling the definition of $S_j$, $j=1,2,3$ given by $\eqref{eq:def_s1}$--$\eqref{eq:def_s3}$, we have by the trace and inverse inequalities that
	\begin{align}
			S_1(\bfx_h,\bfx_h) & \le C\sum_{T\in\calT_h}\big(h_T\|(\bfbeta\cdot \nabla)\bfx_h\|_{0,T}^2+ h_T^3|(\bfbeta\cdot \nabla)\bfx_h|_{1,T}^2\big)\nonumber\\
			& \le C\sum_{T\in\calT_h}\big(h_T\|\bfbeta\|_{0,\infty,T}\|\nabla\bfx_h\|_{0,T}^2+ h_T(1+h_T^2)\|\bfbeta\|_{1,\infty,T}^2\|\nabla\bfx_h\|_{0,T}^2\big)\nonumber\\
			& \le C h \|\nabla\bfx_h\|_h^2\le C h \|\pi_h p-p_h\|_{0,\Omega}^2\,. 		\label{eq:p-phS1}
	\end{align}
	For $S_2$, we similarly have that
	\begin{align} 
			S_2(\bfx_h^{},\bfx_h^{}) & \le C\sum_{T\in\calT_h^{}}\big(h_T^3|(\bfbeta\cdot \nabla)\bfx_h^{}|_{1,T}^2+ h_T^5|(\bfbeta\cdot \nabla)\bfx_h^{}|_{2,T}^2\big) \nonumber\\
			& \le C\sum_{T\in\calT_h^{}}\|\bfbeta\|_{2,\infty,T}^2\big(h_T^{}\|\nabla\bfx_h^{}\big\|_{0,T}^2+ h_T^3\|\nabla\bfx_h^{}\|_{1,T}^2+h_T^5\|\nabla\bfx_h^{}\|_{2,T}^2\big) \nonumber\\
			& \le C\|\bfbeta\|_{2,\infty,\Omega}\sum_{T\in\calT_h^{}}(h_T+h_T^3+h_T^5)\|\nabla\bfx_h^{}\|_{0,T}^2 \nonumber\\
			& \le C h \|\nabla\bfx_h^{}\|_h^2\le C h \|\pi_h^{} p-p_h^{}\|_{0,\Omega}^2\,.\label{eq:p-phS2}
		\end{align}
	Finally, and proceeding similarly as in \eqref{eq:p-phS2},
	\begin{align}
			S_3(\bfx_h^{},\bfx_h^{}) & \le C\sum_{T\in\calT_h^{}}\big(h_T^5|(\bfbeta\cdot \nabla)\bfx_h^{}|_{2,T}^2+ h_T^7|(\bfbeta\cdot \nabla)\bfx_h^{}|_{3,T}^2\big)\nonumber\\
			& \le C\|\bfbeta\|_{3,\infty,\Omega}\sum_{T\in\calT_h^{}}(h_T+h_T^3+h_T^5+h_T^7)\|\nabla\bfx_h^{}\|_{0,T}^2 \nonumber\\
			& \le C h \|\nabla\bfx_h^{}\|_h^2\le C h \|\pi_h^{} p-p_h^{}\|_{0,\Omega}^2\,. \label{eq:p-phS3}
		\end{align}
	Putting estimates $\eqref{eq:p-phS1}$--$\eqref{eq:p-phS3}$ together, we obtain \[|\bfx_h^{}|_S\le Ch^{\frac12}\|\pi_h^{}p-p_h^{}\|_{0,\Omega}^{},\]and combining this with \eqref{eq:p-phe1}, we have the desired result.
\end{proof}

\begin{remark}
The analysis carried out in the last section hinges on the regularity of the convective term $\bfbeta\cdot\nabla\bfu$. Although this is a restriction for certain flows, it is worth making the following remark about this requirement. If we think about the Oseen problem as a simplified model for the Navier-Stokes equation, then it is expected that the convective field $\bfbeta$ has the same regularity as the exact velocity $\bfu$. In addition, in order to prove optimal error estimates (not only the ones proven in this paper, but even standard error estimates), the velocity is required to belong to, at least, $H^{k+1}(\Omega)^d$ whenever discrete velocities of degree $k$ are used. Hence, for polynomial degrees $k\ge 4$ the exact velocity is assumed to belong to, at least, $H^5(\Omega)^d$, and in such a case the convective term is regular enough so the strong consistency \eqref{eq:gal_ortho} holds. So, the regularity required in the convective term is, in theory, only a restriction for the cases $k=2,3$. 
We want to stress that the hypotheses imposed in this work are with the aim of proving the enhanced error estimates from Theorems~\ref{thm:error_global} and \ref{eq:p-ph}, and not to prove standard estimates, or even convergence of the method.
\end{remark}

\section{Verifying Assumptions (A1)--(A6) for the lowest order Scott--Vogelius elements on Clough-Tocher triangulations}
\label{sec:CTFEM}
In this section we will verify (A1)-(A6), for the Scott--Vogelius finite element spaces on Clough-Tocher triangulations. To obtain a Clough-Tocher triangulation,  we start with a shape-regular family of meshes $\big\{\calTt_h\big\}_{h>0}$. Then, we refine each mesh by adding the barycenter  of each triangle to the set of vertices and connecting it to the vertices of that triangle.  We denote the resulting mesh by $\big\{\calT_h\big\}_{h>0}$. The elements of $\calTt_h$ will be named macro-elements and will be denoted by $K$, while the elements of $\calT_h$ will be denoted by $T$. The meshes $\calT_h$ are used in the analysis above and numerical experiments in Section~\ref{sec:numerics}.

\begin{figure}[!h]
	\centering
	\begin{subfigure}[h]{0.45\textwidth}
		\centering
		\includegraphics[width=\textwidth]{./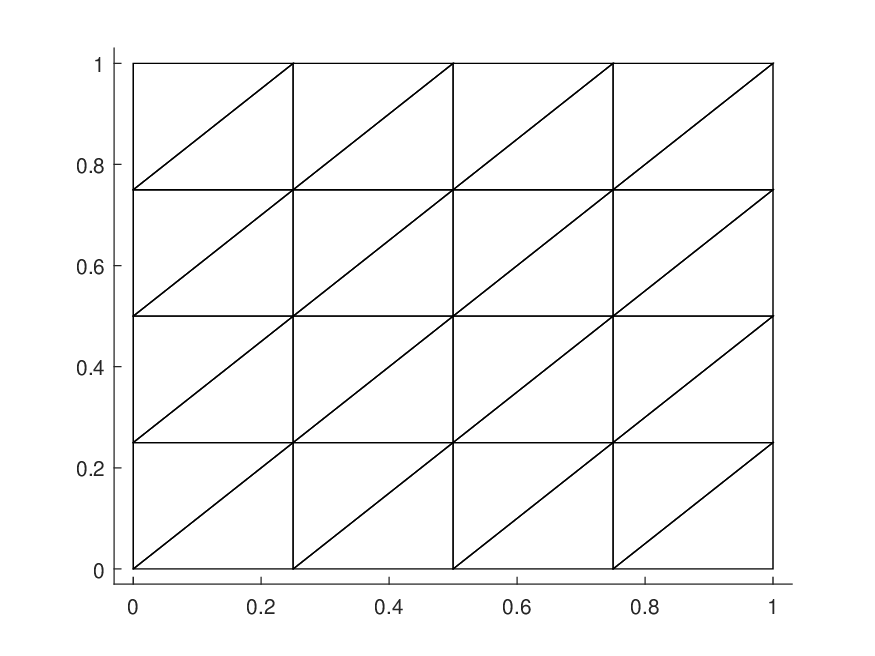}
	\end{subfigure}
	\hfill
	\begin{subfigure}[h]{0.45\textwidth}
		\centering
		\includegraphics[width=\textwidth]{./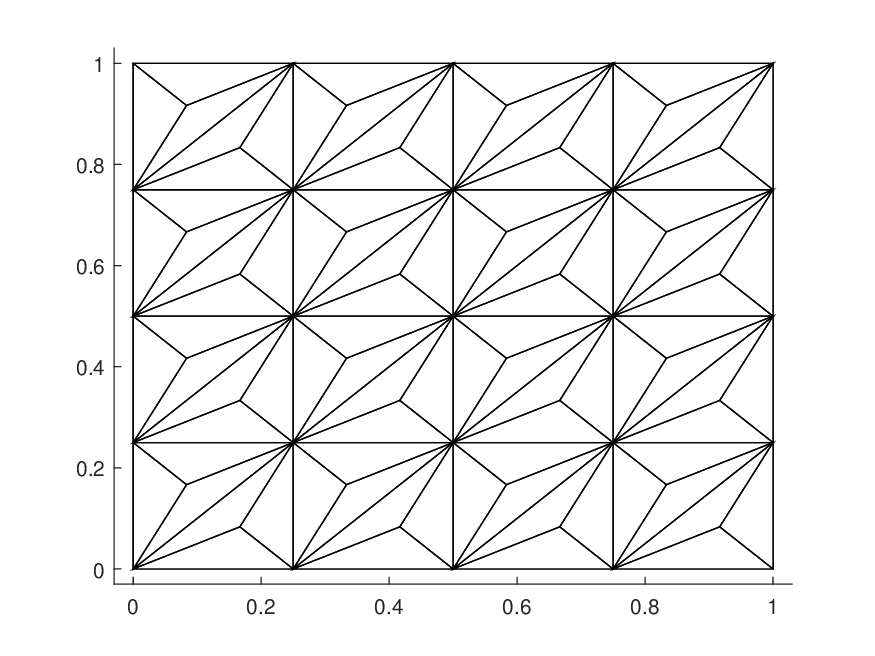}
	\end{subfigure}
	\caption{Initial mesh $\calTt_h$ (left) and its barycentric refinement $\calT_h$ (right).}
\end{figure}
Let us first define the velocity and pressure spaces for  $k \ge 2$: 
\begin{alignat*}{1}
\bfV_h^{k}:=&\bfW_h^k= \{\bfv \in H_0^1(\Omega)^2: \bfv|_T \in \mathbb P_k(T)^2, \forall T \in \calT_h\} \\
Q_h^{k-1}:=& \{q \in L_0^2(\Omega): q|_T \in \mathbb P_{k-1}(T), \forall T \in \calT_h\}.
\end{alignat*}
The potential space is given by
\[\bZ_h^{k+1} = \big\{\bz_h\in C^1(\Omega):\bz_h|_K\in Z_{k+1}(K)\text{ for all }K\in\calTt_h\big\}, \]
where
\begin{equation*}
	\begin{split}
		D_m(K)&:=\{p\in L^2(K):p|_T\in\mathbb P_m(T)\text{ for all }T\in\calT_h\text{ such that }T\subset K\},\text{ and}\\
		Z_m(K)&:=D_m(K)\cap C^1(K).
	\end{split}
\end{equation*}
We argue here that the triple $\bZ_h=\bZ_h^{k+1}, \bfV_h= \bfV_h^{k}, Q_h=Q_h^{k-1}$ satisfies Assumptions (A1)-(A6). 
The Assumptions (A1)-(A4) were already discussed for these spaces in \cite{BarrBurmanGuzman}. In particular, (A4) follows form the exact sequence properties of spaces on Clough-Tocher splits; see for example, \cite[p. 1068]{FuGuzmanNeilan2018} for a general result in any spatial dimension on  Alfeld splits, which coincides with the Clough-Tocher split in two dimensions.  Assumption (A5) can be proved if we assume for instance that $\calTt_h$ is quasi-uniform. Indeed, in this case we can use inverse estimates, the stability of the $L^2$ projection, and approximation properties of $C^1$ Clough-Tocher finite elements given in \cite[Theorem 2]{dupont1979family}.

The remaining Assumption (A6) follows from a slightly more general result. 
\begin{lemma}\label{A6}
 Let $m \ge 3$. There exists $C>0$, independent of $h$, such that for all $\bfw_h\in \bfD^{m}$, it holds
	\[ \inf_{\bfm_h \in \bZ_h^m} \| h^{3/2}(\bfw_h-\bfm_h)\|_h \le   C  (\|h^{2} \jump{\bfw_h}\|_{\calF^i}+ \|h^{3} \jump{\nabla \bfw_h}\|_{\calF^i}). \]
\end{lemma}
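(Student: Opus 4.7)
I will construct the candidate $\bfm_h \in \bZ_h^m$ via an Oswald-type averaging operator and then bound the error element-by-element. First, fix a local basis of $\bZ_h^m$ on each Clough-Tocher macro-element $K \in \calTt_h$, with nodal degrees of freedom (DOFs) located at macro-vertices (point values and first-order derivatives), along macro-edges (normal derivative values or moments, plus edge moments of the trace, as needed to match a $C^1$ polynomial of degree $m$), and for higher $m$ also at the barycenter and interior sub-edges of $K$. Because $\bZ_h^m \subset C^1(\Omega)$ enforces only $C^1$ continuity, every DOF shared between neighboring sub-triangles depends only on values of $\bfw_h$ and $\nabla\bfw_h$; no higher-order derivative continuity is invoked. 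Define
\[
\ell_i(A_h\bfw_h) := \frac{1}{|\mcA_i|}\sum_{T \in \mcA_i}\ell_i(\bfw_h|_T),
\]
where $\mcA_i$ is the set of sub-triangles of $\calT_h$ whose restriction $\bfw_h|_T$ enters the DOF $\ell_i$, and take $\bfm_h := A_h\bfw_h \in \bZ_h^m$.

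On each $T \in \calT_h$ expand
\[
(\bfw_h - A_h\bfw_h)|_T = \sum_i c_i\,\phi_i^T, \qquad c_i := \ell_i(\bfw_h|_T) - \ell_i(A_h\bfw_h),
\]
where $\phi_i^T$ is the basis function on $T$ dual to $\ell_i$. Each $c_i$ is an average of differences $\ell_i(\bfw_h|_T) - \ell_i(\bfw_h|_{T'})$ for $T'\in\mcA_i$, and each such difference telescopes along a short path of neighboring sub-triangles into a sum of pointwise or integrated jumps of $\bfw_h$ or $\nabla\bfw_h$ across interior faces of that path. For vertex-based pointwise DOFs, the one-dimensional inverse trace inequality $|p(v)| \le C h^{-1/2}\|p\|_{0,F}$, valid for $p \in \pol_{m}(F)$ and $v \in F$, converts pointwise jumps into $L^2$-face jumps, yielding
\[
|c_i| \le C h^{-1/2}\|\jump{\bfw_h}\|_{0,F} \quad \text{or} \quad |c_i| \le C h^{-1/2}\|\jump{\nabla \bfw_h}\|_{0,F}
\]
for a face $F$ in the patch. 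Edge-moment DOFs are bounded directly by Cauchy--Schwarz, giving the same types of terms (without the $h^{-1/2}$ factor, but compensated by the DOF normalization). Standard finite element scaling yields $\|\phi_i^T\|_{0,T}\le C h$ for value DOFs and $\|\phi_i^T\|_{0,T}\le C h^2$ for first-derivative DOFs, so that
\[
\|\bfw_h - A_h\bfw_h\|_{0,T} \le C\sum_{F} \big(h^{1/2}\|\jump{\bfw_h}\|_{0,F} + h^{3/2}\|\jump{\nabla\bfw_h}\|_{0,F}\big),
\]
with the sum over interior faces in a fixed-size neighborhood of $T$.

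Multiplying by $h^{3/2}$, summing over $T\in\calT_h$, and using shape regularity to bound the multiplicity with which any given face enters the sum, yields the stated estimate. The main obstacle I anticipate is the structural step: although the classical $m=3$ Clough-Tocher element has a well-known 12-DOF description, for $m \ge 4$ one must carefully verify that a set of local DOFs can be chosen so that (i) each DOF shared between macro-elements or between sub-triangles within a macro-element is expressed in terms of $\bfw_h$ and $\nabla\bfw_h$ alone (never in terms of higher-order derivatives), and (ii) the averaged DOF values genuinely produce an element of $\bZ_h^m$. Once this finite-dimensional bookkeeping is in place, the remaining inverse-trace and scaling arguments are routine.
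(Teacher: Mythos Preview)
Your overall strategy---define an Oswald-type averaging operator into $\bZ_h^m$ and control the DOF discrepancies by jump terms---matches the paper's. However, there is a genuine gap in your basis-expansion step. You propose to write $(\bfw_h - A_h\bfw_h)|_T = \sum_i c_i\,\phi_i^T$ with $\phi_i^T$ dual to the Clough--Tocher DOFs $\ell_i$, but the Clough--Tocher DOFs are \emph{macro-element} functionals: they are unisolvent for $Z_m(K)=D_m(K)\cap C^1(K)$, not for $\pol_m(T)$ on a single sub-triangle $T\subset K$. Since $\bfw_h\in D_m(K)$ is in general not $C^1$ across the interior edges of $K$, the difference $\bfw_h-A_h\bfw_h$ lies in $D_m(K)\setminus Z_m(K)$ and cannot be represented in the Clough--Tocher basis. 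Concretely, for $m=3$ one has $\dim D_3(K)=30$ while $\dim Z_3(K)=12$; the missing $18$ dimensions are precisely the $C^0$ and $C^1$ jump modes across the three interior edges of $K$, and your expansion does not account for them. This is not merely a higher-order bookkeeping issue as you suggest; it is already present at $m=3$.

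The paper circumvents this by working at the level of $D_m(K)$ rather than $\pol_m(T)$. Via scaling to a reference macro-element it shows that the functional built from the interior-edge jumps $\|\jump{\,\cdot\,}\|_{\calE_h^i(K)}^2+\|\jump{\nabla\,\cdot\,}\|_{\calE_h^i(K)}^2$ together with the Clough--Tocher DOFs $\sum_i|\ell_i(\cdot)|^2$ is a norm on all of $D_m(K)$, equivalent to the broken $L^2$ norm $\|\cdot\|_{h,K}$. Applying this equivalence to $\bfw_h-E(\bfw_h)$, the interior-jump contributions equal those of $\bfw_h$ itself (since $E(\bfw_h)\in C^1(K)$), while the DOF contributions are averages of differences, bounded by jumps across faces in a patch via the telescoping argument you already describe. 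No dual-basis expansion on sub-triangles is needed. Your argument becomes correct if you replace the expansion step by this norm equivalence on $D_m(K)$; the remaining ingredients (telescoping, inverse trace on faces, bounded patch overlap) then go through as you wrote them.
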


We will prove this result for the particular case $m=3$. The extension to the  general case $m \ge 3$ can easily be extended, however, the main idea of the proof is captured in the case $m=3$. This will allow us to avoid more complicated notation and technical diversions. Our result is a slight generalization to a result found in \cite{emmanuil}. The only difference is that  $\bfw_h|_K \in \mathbb{P}_m(K)$  in \cite{emmanuil} instead of $D_m(K)$. Naturally, our proof will follow closely the corresponding proof in \cite{emmanuil}.

We recall that $\calF$ denotes the set of edges of the mesh  $\calT_h$.   For a macro--element $K\in\calTt_h$, we let
\begin{equation*}
	\begin{split}
		\calE_h^i(K)&:=\{e\in\calF:e\subset K\text{ and }e\not\subset \partial K\}.
	\end{split}
\end{equation*}
We let $\calV_h(\calTt_h)$ denote all the vertices of the mesh $\calTt_h$ and $\calM_h(\calTt_h)$ all the midpoints of all the edges of $\calTt_h$.  Furthermore, we denote  by $\calV_h(K)$ the three vertices of $K$, and by $\calM_h(K)$ the three mid-points of the three edges of $K$. 

The degrees of freedom of the space $Z_m(K)$ for $m \ge 3$ are given in classical references, for example \cite[p. 229]{dupont1979family}. In particular, a function $\bz_h \in Z_3(K)$ is uniquely determined by the following degrees of freedom:
\begin{itemize}
 	\item[a)] $\bz_h(\bfx)$ for $\bfx\in\calV_h(K)$.
 	\item[b)] $\nabla\bz_h(\bfx)$ for $\bfx\in\calV_h(K)$.
 	\item[c)] $\nabla\bz_h(\bfx) \mathbf n$ for $\bfx\in\calM_h(K)$.
 \end{itemize}

We will use the following broken norm $\|\cdot\|_{h,K}$ for $K\in\calTt_h$ in what follows:
 \begin{equation}\label{eq:defhnorm}
 \|\bfv\|_{h,K}^2:=\sum_{\{T\in\calT_h:T\subset K\}}\|\bfv\|_{0,T}^2.
 \end{equation}
 
 A result regarding equivalence on $D_3(K)$ between the norm \eqref{eq:defhnorm} and a norm involving jumps over interior facets and the degrees of freedom arising from $Z_3(K)$ will be needed for the subsequent analysis. Given $\bfx\in K$ and $\bfw_h\in D_3(K)$, we set $\hat\bfx = \bfx / h_K$ and $\hat\bfw_h(\hat\bfx) = \bfw_h(\bfx)$. Then, we observe that the functional
	\begin{equation*}
		\begin{split}
			\|\hat\bfw_h\|_{D_3(\hat K)}^2:=&\|\jump{\hat\bfw_h}\|_{\calE_h^i(\hat K)}^2+\|\jump{\nabla\hat\bfw_h}\|_{\calE_h^i(\hat K)}^2\\
			&+\sum_{\hat\bfx\in\calV_h(\hat K)}\sum_{\{\hat{T}\subset \hat K:\,\hat{\bfx}\in \hat{T}\}}\big|\hat\bfw_h|_{\hat T}(\hat\bfx)\big|^2+\big|\nabla\hat\bfw_h|_{\hat T}(\hat\bfx)\big|^2\\
			&+\sum_{\hat\bfx \in\calM_h(\hat K)}\big|\nabla \hat\bfw_h|_{\hat K}(\hat \bfx)\bfn\big|^2,
		\end{split}
	\end{equation*}
is a norm over $D_3(\hat K)$. Indeed, if   $\jump{\hat\bfw_h}$ and $\jump{\nabla\hat\bfw_h}$ vanish on all the edges of $\calE_h^i(\hat K)$ then $\hat \bfw_h \in Z_3(\hat K)$. Moreover, if the other terms appearing in the definition of $\|\hat\bfw_h\|_{D_3(\hat K)}$ vanish then given the degrees of freedom of $Z_3(\hat K)$,  $\hat\bfw_h$ vanishes. 

By scaling we have 
\[\|\bfw_h\|_{h,K}^2=h_K^2\|\hat\bfw_h\|_{h,\hat K}^2,\]
 and by the equivalence of norms on the finite dimensional space $D_3(\hat K)$ we have
 \[\|\hat\bfw_h\|_{h,\hat K}^2\le C_{\hat K} \|\hat\bfw_h\|_{D_3(\hat K)}^2.\]
 We assume that the constants $C_{\hat K}$ are uniformly bounded over all $K \in \calTt_h$ as is commonly done; see for example \cite[p. 239]{dupont1979family}.
 We therefore have the following estimate for all $\bfw_h \in Z_3(K)$:
	\begin{equation*}
		\begin{split}
					\|\bfw_h\|_{h,K}^2\le & C( \|h_K^{1/2}\jump{\bfw_h}\|_{\calE_h^i(K)}^2+\|h_K^{3/2}\jump{\nabla\bfw_h}\|_{\calE_h^i(K)}^2)\\
					 &+ C \Big(\sum_{\bfx\in\calV_h(K)}\sum_{\{T\subset K:\bfx\in T\}}h_K^2\big|\bfw_h|_T(\bfx)\big|^2+h_K^4\big|\nabla\bfw_h|_T(\bfx)\big|^2 \Big)\\
					&+C \sum_{\bfx\in\calM_h(K)}h_T^4\big|\nabla\bfw_h|_K(\bfx)\bfn\big|^2.
		\end{split}
	\end{equation*}
We can now prove Lemma \ref{A6} in the case of $m=3$.
\begin{proof} (Lemma \ref{A6}, $m=3$)
	For $\bfx\in\calV_h(\calTt_h)\cup\calM_h(\calTt_h)$, we define
	\[ \omega_{\bfx}:=\{T\in\calT_h:\bfx \in T\},\]
	with $|\omega_{\bfx}|$ denoting the cardinality of $\omega_{\bfx}$. We then define $E:\bfD^{3}\to \bZ_h^3$. If  $\bfx\in\calV_h(\calTt_h)$,
	\[E(\bfw_h)(\bfx)=\dfrac{1}{|\omega_{\bfx}|}\displaystyle\sum_{T\in\omega_{\bfx}}\big(\bfw_h|_T\big)(\bfx),\quad\text{and}\quad\nabla E(\bfw_h)(\bfx)=\dfrac{1}{|\omega_{\bfx}|}\displaystyle\sum_{T\in\omega_{\bfx}}\nabla \big(\bfw_h|_T\big)(\bfx),\]and if $\bfx\in\calM_h(\calTt_h)$, we define
	\[ \big(\nabla E(\bfw_h)\bfn\big)(\bfx)=\dfrac{1}{|\omega_{\bfx}|}\displaystyle\sum_{T\in\omega_{\bfx}}\big(\nabla(\bfw_h|_T)\bfn\big)(\bfx).\]
	Then, we see that $E$ is well defined over $\bfD^{3}$. As it was already mentioned, the construction of the recovery operator $E$ is very similar to the one from \cite[p. 7]{emmanuil}. The main difference lies on the fact that $w_h^{}\in D_3^{}(K)$ is discontinuous across common edges from triangles $T\subset K$, unlike the construction from \cite{emmanuil}, where $w_h^{}\in \mathbb P_3^{}(K)$ is continuous across common edges. By the equivalence of norms over $D_3(K)$ and since $\jump{E(\bfw_h^{})}=\mathbf{0}$ for all  and $\jump{\nabla E(\bfw_h^{})}=\mathbf{0}$ for all interior edges, we have that
	\begin{equation*}
		\begin{split}
			&\|\bfw_h-E(\bfw_h^{})\|_{h,K}^2 \\
			&\le \|h_K^{1/2}\jump{\bfw_h^{}}\|_{\calE_h^i(K)}^2+\|h_K^{3/2}\jump{\nabla\bfw_h^{}}\|_{\calE_h^i(K)}^2\\
			& \quad + \sum_{\bfx\in\calV_h^{}(T)}\sum_{\{T\subset K:\bfx\in T\}}h_K^2\big|\bfw_h^{}|_T(\bfx)-E(\bfw_h^{})(\bfx)\big|^2+h_K^4\big|\nabla\bfw_h^{}|_T(\bfx)-\nabla E(\bfw_h^{})(\bfx)\big|^2\\
			&\quad +\sum_{\bfx\in\calM_h^{}(T)}\sum_{\{T\subset K:\bfx\in T\}}h_K^4\big|\nabla\bfw_h^{}|_T^{}(\bfx)\bfn-\nabla E(\bfw_h^{})(\bfx)\bfn\big|^2.
		\end{split}
	\end{equation*}
We consider a local numbering $T_1,\ldots,T_{|\omega_{\bfx}|}$ of the elements in $\omega_\nu$, so that each consecutive pair $T_j,T_{j+1}$ shares an edge. Then, we recall the arithmetic-geometric mean inequality; if $a_1,\ldots,a_n$ are real numbers, we have for their average $\bar a=(a_1+\ldots+a_n)/n$ that
\[\sum_{j=1}^n(a_j-\bar a)^2\le C_n\sum_{j=1}^{n-1} (a_{j+1}-a_j)^2,\]
for a constant $C_n>0$ that depends on $n$ only. Using the above inequality, the constant $C_n$ depends upon $|\omega_{\bfx}|$ only, that is, on the shape regularity of the mesh. Hence, we have that
	\begin{equation*}
		\begin{split}
			&\sum_{\bfx\in\calV_h(K)}\sum_{\{T\subset K:\bfx\in T\}}h_K^2\big|\bfw_h|_T(\bfx)-E(\bfw_h)(\bfx)\big|^2 \\
			&\qquad =\sum_{\bfx\in\calV_h(K)}\sum_{\{T\subset K:\bfx\in T\}}h_K^2\bigg|\bfw_h|_T(\bfx)-\dfrac{1}{|\omega_{\bfx}|}\sum_{T\in\omega_{\bfx}}\bfw_h|_{T}(\bfx)\bigg|^2\\
			&\qquad \le C \sum_{\bfx\in\calV_h(K)}h_K^{2}\sum_{j=1}^{|\omega_{\bfx}|-1}\big|\bfw_h|_{T_{j+1}}(\bfx)-\bfw_h|_{T_{j}}(\bfx)\big|^2\\
			& \qquad \le C\sum_{e\in \mathcal{G}_h(K)}\|h\jump{\bfw_h}\|_{0,\infty,e}^2\le C\|h^{1/2}\jump{\bfw_h}\|_{\mathcal{G}_h(K)}^2,
		\end{split}
	\end{equation*}
where $\mathcal{G}_h(K)$ is the collection of edges belonging to $\calF^i$ that touch $K$. For the remaining terms involving partial and normal derivative evaluations, we proceed similarly as the last bound and as in the proof of \cite[Lemma~3.1]{emmanuil}, so we omit the details. The proof is then complete by summing over all the macro-elements $K\in\calTt_h$ and putting the above estimates together.
\end{proof}


\color{black}


\section{Numerical results}
\label{sec:numerics}

In this section we report the results of two test cases showcasing the performance of the current method. In both cases we consider the domain $\Omega=(0,1)^2$,
and the meshes have been built as follows: we divide the unit square into two triangles starting by tracing the diagonal going from $(0,0)$ to $(1,1)$. Then, we perform a
sequence of red refinements, to obtain a sequence of shape-regular grids. We consider structured meshes, where the vertices of triangles of these are placed uniformly on the unit square, and non-structured meshes, where the vertices are randomly perturbed $\approx 0.07h$ units away by the use of a uniform distribution.
The level 4 non-structured grid considered in our experiments is shown in Fig.~\ref{Fig1}. The corresponding degrees of freedom for the velocity and the pressure are listed next to it. In the tables below, we use the following notations for the velocity errors:
\[L^2(\bfu):=\|\bfu-\bfu_h^{}\|_{0,\Omega}^{}\quad,\quad e_s(\bfu):=\vertiii{\bfu-\bfu_h^{}},\]
and for the pressure errors:
\[L^2(p):=\|p-p_h^{}\|_{0,\Omega}\quad,\quad L^2(\pi_h^{} p):=\|\pi_h^{} p-p_h^{}\|_{0,\Omega}^{}\,,\]
where $\pi_h^{} p$ is the $L^2(\Omega)$-projection of $p$ onto $Q_h$.
\begin{figure}[!h]
		\centering
		\begin{subfigure}[h]{0.45\textwidth}
			\centering
			\includegraphics[width=0.7\textwidth]{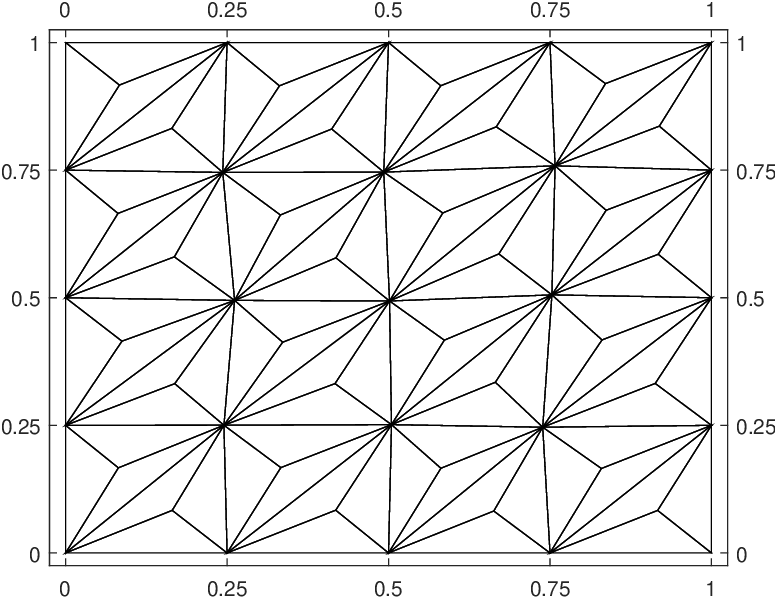}
		\end{subfigure}
		\hfill
		\begin{subfigure}[h]{0.5\textwidth}
				\begin{tabular}{l|lll}
					level & dofs $\bfu_h$ & dofs $p_h$ & total dofs \\
					\hline
					1 & 354 & 288 & 642\\
					2 & 1474 & 1152 & 2626\\
					3 & 6018 & 4608 & 10626 \\
					4 & 24322 & 18432 & 42754 \\
					5 & 97794 & 73728 & 171522 \\
				\end{tabular}
		\end{subfigure}
		\caption{{\it Level 4 non-structured mesh (left) and number of degrees of freedom for all refinement levels (right)}}\label{Fig1}
\end{figure}

\subsection{A problem with a regular solution.} This example combines a planar lattice flow with one whose convective force is constant. We fix $\mu = 10^{-9}$, and set the convective term to $\bfbeta = \bfu + (0,1)^T$, with chosen boundary conditions such that
\[\bfu = \big(\sin(2\pi x)\sin(2\pi y), \cos(2\pi x)\cos(2\pi y)\big)^T,\qquad p = \frac{1}{4}\big(\cos(4\pi x)-\cos(4\pi y)\big).\]
Note that $\bfbeta \cdot \bfn$ does not vanish on the boundary as our theory required. Nonetheless, the expected results are observed numerically as we shall see. Then, $(\bfu,p)$ is the solution to the Oseen problem \eqref{eq:oseen} with the forcing term $\bff$ given by \[\bff = \sigma\bfu-\mu\Delta\bfu+2\pi\big(\sin (2\pi x)\cos (2\pi y),-\cos (2\pi x)\sin (2 \pi y)\big)^T.\] The stabilization parameters $\delta_j$ associated to the bilinear forms $S_j$, $j=1,2,3$ given by $\eqref{eq:def_s1}$, $\eqref{eq:def_s2}$ and $\eqref{eq:def_s3}$ are given by 

\[\delta_1 = 1e-2,\quad \delta_2=1e-5,\quad \delta_3=1e-4.\]

In Table~\ref{tab:t1} we report the errors obtained for two values of $\sigma$ on non-structured or distorted meshes, and in Table~\ref{tab:t2} we do the same on uniform meshes instead. We can observe that the $O(h^{2.5})$ rate for the error $L^2(\bfu)$ is reached early in the calculations, and we can also observe that the pressure converges with the orders predicted by Theorem~\ref{eq:p-ph}, including the super-convergence for $p_h^{}-\pi_h^{}(p)$. In addition, numerical results (not shown here) obtained using only the first stabilising term (that is, using $\delta_2^{}=\delta_3^{}=0$) show an order of convergence $r_0^{}(\bfu)=2.0$, further stressing the need to add the extra terms to the CIP method in order to obtain the enhanced order of convergence given in Theorem~\ref{thm:error_global}. The rate $r_1^{}(\bfu)$ is slightly below what the theory would make us expect, but we believe this is due to a pre-asymptotic regime caused by the very small value of the viscosity $\mu$.

\begin{table}[!ht]
	\begin{tabular}{l|c|ll|ll|ll|ll}
		$\sigma$ & ref & $L^2(\bfu)$ & $r_0(\bfu)$ & $e_s(\bfu)$ & $r_1(\bfu)$ & $L^2(p)$ & $r_0(p)$ & $L^2(\Pi _hp)$ & $r_1(p)$ \\
		\hline
		\multirow{5}{*}{0} & 
		1 & 3.53e-1 & ------ & 1.28e+0  & ------ & 1.11e+0 & ------ & 1.11e+0 & ------\\
		& 2 & 4.55e-2 & 2.981  & 1.70e-1  & 2.937  & 1.17e-1 & 3.276 & 1.12e-1 & 3.337\\
		& 3 & 6.80e-2 & 2.812  & 2.90e-2  & 2.621  & 1.72e-2 & 2.836 & 1.50e-2 & 2.984\\
		& 4 & 1.01e-3 & 2.785  & 5.77e-3  & 2.366  & 3.05e-3 & 2.357 & 2.16e-3 & 2.841\\
		& 5 & 1.41e-4 & 2.870  & 1.12e-3  & 2.379  & 6.20e-4 & 2.320 & 3.07e-4 & 2.838\\
		\hline
		\multirow{5}{*}{1} & 
		1 & 3.01e-1 & ------ &  1.29e+0 & ------ & 1.04e+0 & ------ & 1.04e+0 & ------\\
		& 2 & 3.76e-2 & 3.027 & 1.71e-1 & 2.941  & 1.16e-1 & 3.191 & 1.12e-1 & 3.252\\
		& 3 & 5.34e-3 & 2.889 & 2.92e-2 & 2.618  & 1.73e-2 & 2.819 & 1.51e-2 & 2.964\\
		& 4 & 7.39e-4 & 2.900 & 5.80e-3 & 2.373  & 3.06e-3 & 2.544 & 2.16e-3 & 2.848\\
		& 5 & 9.54e-5 & 2.978 & 1.12e-3 & 2.382  & 6.20e-4 & 2.322 & 3.07e-4 & 2.841\\
	\end{tabular}
	\caption{{\it Error plots and rates of convergence of different norms on different refinement levels on distorted meshes for Scott-Vogelius with the current CIP stabilization ($\sigma = 0$ above and $\sigma = 1$ below) and fixed viscosity $\mu=10^{-9}$.}}
	\label{tab:t1}
\end{table}

\begin{table}[!ht]
	\begin{tabular}{l|c|ll|ll|ll|ll}
		$\sigma$ & ref & $L^2(\bfu)$ & $r_0(\bfu)$ & $e_s(\bfu)$ & $r_1(\bfu)$ & $L^2(p)$ & $r_0(p)$ & $L^2(\Pi _hp)$ & $r_1(p)$ \\
		\hline
		\multirow{5}{*}{0} & 
		1 & 3.38e-1 & ------ & 1.14e+0  & ------ & 9.74e-1 & ------ & 9.77e-1 & ------\\
		& 2 & 4.17e-2 & 3.014  & 1.15e-1  & 2.904  & 9.97e-2 & 3.288 & 9.45e-2 & 3.370\\
		& 3 & 6.57e-2 & 2.671  & 2.65e-2  & 2.532  & 1.50e-2 & 2.730 & 1.23e-2 & 2.933\\
		& 4 & 1.05e-3 & 2.644  & 5.35e-3  & 2.307  & 2.82e-3 & 2.412 & 1.82e-3 & 2.760\\
		& 5 & 1.61e-4 & 2.698  & 1.05e-3  & 2.341  & 5.98e-4 & 2.237 & 2.66e-4 & 2.775\\
		\hline
		\multirow{5}{*}{1} & 
		1 & 2.90e-1 & ------ &  1.15e+0 & ------ & 9.18e-1 & ------& 9.21e-1 & ------\\
		& 2 & 3.48e-2 & 3.060 & 1.54e-1 & 2.903  & 9.95e-2 & 3.206 & 9.43e-2 & 3.287\\
		& 3 & 5.09e-3 & 2.776 & 2.67e-2 & 2.531  & 1.51e-2 & 2.718 & 1.24e-2 & 2.917\\
		& 4 & 7.51e-4 & 2.760 & 5.38e-3 & 2.314  & 2.82e-3 & 2.419 & 1.83e-3 & 2.767\\
		& 5 & 1.05e-5 & 2.829 & 1.05e-3 & 2.346  & 5.98e-4 & 2.239 & 2.66e-4 & 2.779\\
	\end{tabular}
	\caption{{\it Error plots and rates of convergence of different norms on different refinement levels on uniform meshes for Scott-Vogelius with the current CIP stabilization ($\sigma = 0$ above and $\sigma = 1$ below) and fixed viscosity $\mu=10^{-9}$.}}
	\label{tab:t2}
\end{table}

\newpage

\subsection{A solution with a boundary layer.}\label{sec:ex2} In this example, we illustrate why stabilization is needed in order to obtain a good approximation of $\bfu$ and $p$ and treat boundary layers appropriately, as well as showing how many stabilization terms are enough to achieve this. We fix $ \mu = 10^{-8}$  and $\sigma = 0$, and the convective term is given by $\bfbeta = (1,0)^T$. The boundary conditions are chosen so that $(\bfu,p)$ is the solution to the Oseen problem \eqref{eq:oseen} with $\bff = \mathbf{0}$, with
\[\bfu(x,y) = \Bigg(0, x-\frac{\exp\big(\tfrac{x-1}{\mu}\big)-\exp\big(\tfrac{-1}{\mu}\big)}{1-\exp\big(\tfrac{-1}{\mu}\big)}\Bigg)^T,\qquad p(x,y) = \frac12 - y.\]
We observe that $\bfu$ is divergence-free, and that its second component experiences very large gradients near the line $x=1$. We expect that stabilization for controlling this large magnitude derivatives will be needed. In order to show this, we choose three different sets of stabilization parameters $\delta_j$ associated to the bilinear forms $S_j$, $j=1,2,3$ given by \eqref{eq:def_s1}, 
\eqref{eq:def_s2} and \eqref{eq:def_s3}. We take a mesh like in Fig.~\ref{Fig1} with $h\approx 0.0442$, and the comparisons are performed by depicting cross sections
of the second component of the velocity along the line $y=0.5$.
First, we consider $(\delta_1,\delta_2,\delta_3)=(0,0,0)$, that is, no stabilization at all. From Figure~\ref{fig:ex2-dd} (top left), we see that the approximate solution is very oscillatory and there is pollution throughout the domain. Then, as intermediate cases (top right and bottom left), we consider $(\delta_1,\delta_2,\delta_3)=(0.1,0,0)$ and the \textit{classical} CIP method \cite{BFH06} with $\delta=0.1$, respectively. By classical CIP method we mean that the stabilizing term is given by
\[S(\bfu_h^{},\bfv_h^{}):= \delta\,\|\bfbeta\|_{0,\infty,\Omega}^{-1}\,\bl h^2 \jump{(\bfbeta \cdot \nabla)\bfu_h^{}},\jump{(\bfbeta \cdot \nabla) \bfv_h^{} }\br_{\calF^i}^{}.\]
We observe that strong oscillations still persist for these cases. Finally, we take $(\delta_1,\delta_2,\delta_3)=(0.1,0.01,0.001)$ (bottom right), which is the fully stabilized method considered in this work. For this case, although a slight oscillation localized near the boundary layer appears, it is of a smaller magnitude than the one for the CIP method and the partial stabilization, and there is a total absence of  oscillations away from the layer. 

From the results depicted in Figure~\ref{fig:ex2-dd} we conclude by the above experiments that oscillations persist in both cases $(\delta_1,\delta_2,\delta_3) = (0.1,0,0)$ and the classical CIP method defined above. Thus, the need to add stabilization on the higher order derivatives over the facets of the triangulation.

\begin{figure}[!h]
	\centering
 	\begin{subfigure}[h]{0.45\textwidth}
		\centering
		\includegraphics[width=\textwidth]{./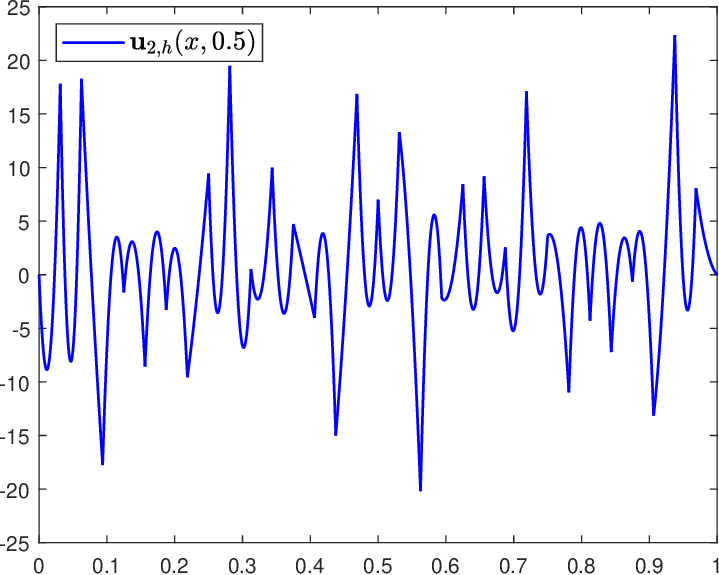}
	\end{subfigure}
 	\hfill
	\begin{subfigure}[h]{0.45\textwidth}
		\centering
		\includegraphics[width=\textwidth]{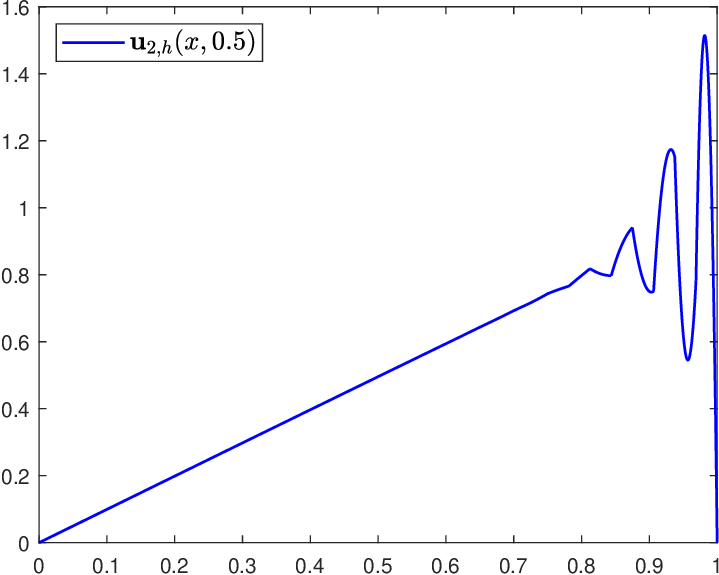}
	\end{subfigure}\\
	\vspace{0.5cm}
	\begin{subfigure}[h]{0.45\textwidth}
		\centering
		\includegraphics[width=\textwidth]{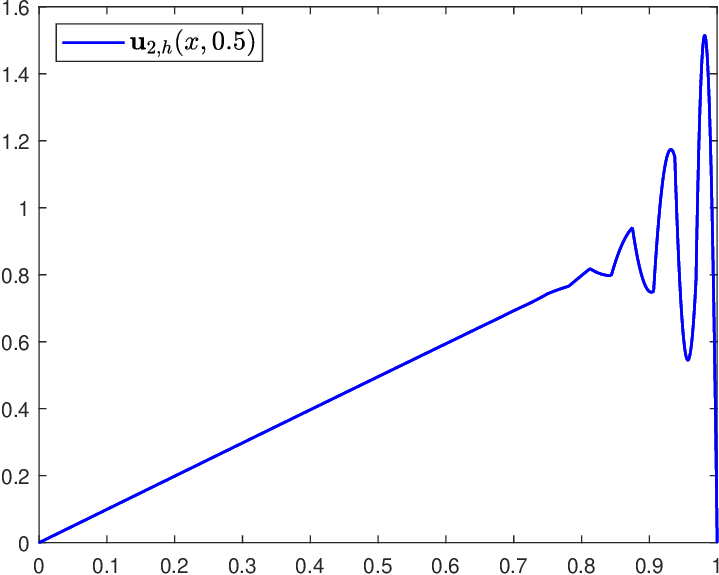}
	\end{subfigure}
	\hfill
	\begin{subfigure}[h]{0.45\textwidth}
		\centering
		\includegraphics[width=\textwidth]{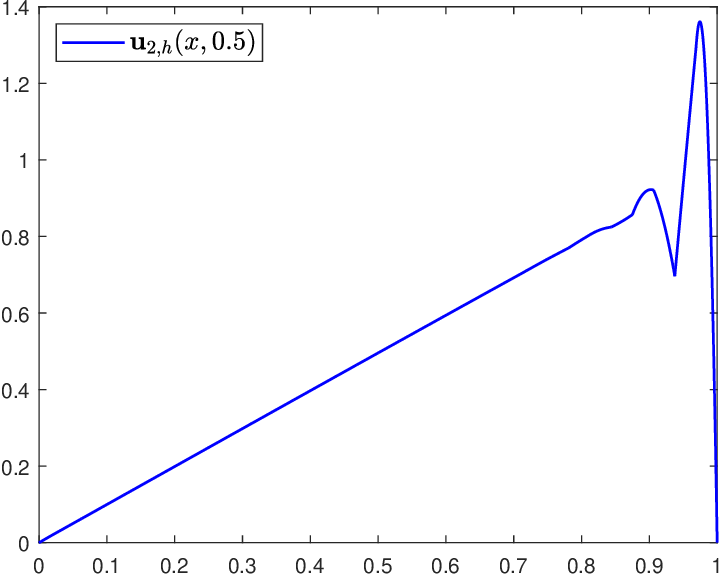}
	\end{subfigure}
	\caption{{\it Example 5.2. Cross sections at $y=0.5$ for the Scott-Vogelius element. No stabilization (top left), partial stabilization (top right) $(\delta_1,\delta_2,\delta_3)=(0.1,0,0)$, Full CIP stabilization (bottom left) and full stabilization (bottom right) $(\delta_1,\delta_2,\delta_3)=(0.1,0.01,0.001)$}.}
	\label{fig:ex2-dd}
\end{figure}


\section{Conclusions and outlook} \label{concl}
This work extended the applicability of the edge-based stabilization framework to the case where the finite element spaces for the velocity lead to a pointwise divergence-free discrete velocity. More precisely, after realizing that the classical CIP method does not provide enough stability to prove enhanced error estimates for the velocity (remark already hinted in \cite{BL08} where the order of convergence was limited to $k$, rather than the $k+\frac{1}{2}$ provided in this paper), we have proposed to add two extra terms to the edge stabilization. These terms are based on jumps of higher derivatives of the convective term. As such, the method presented in this work is an extension of the one presented in the recent work \cite{tong2022skeletonstabilized}. The main result of the paper is the improved error estimate proven in Theorem~\ref{thm:error_global}, which can only be achieved thanks to the extra terms added to the formulation. This claim is confirmed by our numerical results that show that, while the current method shows the claimed order of convergence, the classical CIP method does not. \\~
Since this work's goal was to show that CIP needs to be enhanced with extra terms to achieve the right stability in the low viscosity limit, there are still several open problems, such as:
\begin{itemize}
    \item the possibility of lowering the regularity required to the convective. While we believe that this is mostly an artifact of the proof, and that it is only an issue appearing in the lower order cases, we acknowledge that this might become important if convergence of the method for less regular solutions is sought.
   \item 
The proof of (A6) shown in Section~\ref{sec:CTFEM} is restricted to the case $d=2$. A similar approach using Alfeld splits can be followed, arguing as in  \cite{FuGuzmanNeilan2018}, to extend this proof to the case $d=3$. 
    \item The extension to time-dependent problems. The design of the method adds a symmetric positive semi-definite term to the equation, which, maybe more importantly than the symmetry itself, does not involve the residual of the strong equation in the bulk. This last point decouples the space stabilization from the time discretization and makes the method extensible to the time-dependent case in a natural way using the techniques presented, e.g., in \cite{BF07}.
\end{itemize}
These topics (among others) are the subject of current research, and will be reported elsewhere.

\section*{Acknowledgements} The work of GRB has been funded by the Leverhulme Trust through the Research Fellowship No. RF-2019-510. EB has been partially funded by the EPSRC grant EP/T033126/1.

\bibliographystyle{abbrv}
\bibliography{references}

\begin{thebibliography}{10}

\bibitem{BarrBurmanGuzman}
N.~Ahmed, G.~Barrenechea, E.~Burman, J.~Guzm{\'a}n, A.~Linke, and C.~Merdon.
\newblock A pressure-robust discretization of oseen's equation using
  stabilization in the vorticity equation.
\newblock {\em SIAM Journal on Numerical Analysis}, 59(5):2746–2774, Oct.
  2021.

\bibitem{ABPV12}
R.~Araya, G.~R. Barrenechea, A.~H. Poza, and F.~Valentin.
\newblock Convergence analysis of a residual local projection finite element
  method for the navier–stokes equations.
\newblock {\em SIAM Journal on Numerical Analysis}, 50(2):669--699, 2012.

\bibitem{BBG20}
G.~Barrenechea, E.~Burman, and J.~Guzm\'{a}n.
\newblock Well-posedness and {$H(\rm div)$}-conforming finite element
  approximation of a linearised model for inviscid incompressible flow.
\newblock {\em Math. Models Methods Appl. Sci.}, 30(5):847--865, 2020.

\bibitem{BV10}
G.~R. Barrenechea and F.~Valentin.
\newblock Consistent local projection stabilized finite element methods.
\newblock {\em SIAM Journal on Numerical Analysis}, 48(5):1801--1825, 2010.

\bibitem{BCCHRS07}
Y.~Bazilevs, V.~M. Calo, J.~A. Cottrell, T.~J.~R. Hughes, A.~Reali, and
  G.~Scovazzi.
\newblock Variational multiscale residual-based turbulence modeling for large
  eddy simulation of incompressible flows.
\newblock {\em Comput. Methods Appl. Mech. Engrg.}, 197(1-4):173--201, 2007.

\bibitem{BB04}
R.~Becker and M.~Braack.
\newblock A two-level stabilization scheme for the {N}avier-{S}tokes equations.
\newblock In {\em Numerical mathematics and advanced applications}, pages
  123--130. Springer, Berlin, 2004.

\bibitem{BBH11}
R.~Becker, E.~Burman, and P.~Hansbo.
\newblock A finite element time relaxation method.
\newblock {\em C. R. Math. Acad. Sci. Paris}, 349(5-6):353--356, 2011.

\bibitem{BB06}
M.~Braack and E.~Burman.
\newblock Local projection stabilization for the {O}seen problem and its
  interpretation as a variational multiscale method.
\newblock {\em SIAM J. Numer. Anal.}, 43(6):2544--2566, 2006.

\bibitem{BH82}
A.~N. Brooks and T.~J.~R. Hughes.
\newblock Streamline upwind/{P}etrov-{G}alerkin formulations for convection
  dominated flows with particular emphasis on the incompressible
  {N}avier-{S}tokes equations.
\newblock {\em Comput. Methods Appl. Mech. Engrg.}, 32(1-3):199--259, 1982.
\newblock FENOMECH ''81, Part I (Stuttgart, 1981).

\bibitem{BF07}
E.~Burman and M.~A. Fern\'{a}ndez.
\newblock Continuous interior penalty finite element method for the
  time-dependent {N}avier-{S}tokes equations: space discretization and
  convergence.
\newblock {\em Numer. Math.}, 107(1):39--77, 2007.

\bibitem{BFH06}
E.~Burman, M.~A. Fern\'{a}ndez, and P.~Hansbo.
\newblock Continuous interior penalty finite element method for {O}seen's
  equations.
\newblock {\em SIAM J. Numer. Anal.}, 44(3):1248--1274, 2006.

\bibitem{BGL09}
E.~Burman, J.~Guzm\'{a}n, and D.~Leykekhman.
\newblock Weighted error estimates of the continuous interior penalty method
  for singularly perturbed problems.
\newblock {\em IMA J. Numer. Anal.}, 29(2):284--314, 2009.

\bibitem{BHL22}
E.~Burman, P.~Hansbo, and M.~G. Larson.
\newblock Error estimates for the {S}magorinsky turbulence model: enhanced
  stability through scale separation and numerical stabilization.
\newblock {\em J. Math. Fluid Mech.}, 24(1):Paper No. 5, 29, 2022.

\bibitem{BL08}
E.~Burman and A.~Linke.
\newblock Stabilized finite element schemes for incompressible flow using
  {S}cott-{V}ogelius elements.
\newblock {\em Appl. Numer. Math.}, 58(11):1704--1719, 2008.

\bibitem{Cod08}
R.~Codina.
\newblock Analysis of a stabilized finite element approximation of the {O}seen
  equations using orthogonal subscales.
\newblock {\em Appl. Numer. Math.}, 58(3):264--283, 2008.

\bibitem{Costabel}
M.~Costabel and A.~McIntosh.
\newblock On {B}ogovskii and regularized {P}oincar\'e integral operators for de
  {R}ham complexes on {L}ipschitz domains.
\newblock {\em Mathematische Zeitschrift}, pages 297--320, 2010.

\bibitem{dFGJN19}
J.~de~Frutos, B.~García-Archilla, V.~John, and J.~Novo.
\newblock {Error analysis of non inf-sup stable discretizations of the
  time-dependent Navier–Stokes equations with local projection
  stabilization}.
\newblock {\em IMA Journal of Numerical Analysis}, 39(4):1747--1786, 07 2018.

\bibitem{DD76}
J.~Douglas, Jr. and T.~Dupont.
\newblock Interior penalty procedures for elliptic and parabolic {G}alerkin
  methods.
\newblock In {\em Computing methods in applied sciences ({S}econd {I}nternat.
  {S}ympos., {V}ersailles, 1975)}, pages 207--216. Lecture Notes in Phys., Vol.
  58. 1976.

\bibitem{dupont1979family}
T.~Dupont, P.~Percell, R.~Scott, et~al.
\newblock A family of ${C}^1$ finite elements with optimal approximation
  properties for various galerkin methods for 2nd and 4th order problems.
\newblock {\em RAIRO. Analyse num{\'e}rique}, 13(3):227--255, 1979.

\bibitem{EG21-I}
A.~Ern and J.-L. Guermond.
\newblock {\em Finite Elements I. Approximation and Interpolation}, volume~72
  of {\em Texts in Applied Mathematics}.
\newblock Springer-Verlag, 2021.

\bibitem{FuGuzmanNeilan2018}
G.~Fu, J.~Guzm\'an, and M.~Neilan.
\newblock Exact smooth piecewise polynomial sequences on alfeld splits.
\newblock {\em Mathematics of Computation}, 89(323):1059--1091, 2020.

\bibitem{GJN21}
B.~Garc\'{\i}a-Archilla, V.~John, and J.~Novo.
\newblock On the convergence order of the finite element error in the kinetic
  energy for high {R}eynolds number incompressible flows.
\newblock {\em Comput. Methods Appl. Mech. Engrg.}, 385:Paper No. 114032, 54,
  2021.

\bibitem{emmanuil}
E.~H. Georgoulis, P.~Houston, and J.~Virtanen.
\newblock {An a posteriori error indicator for discontinuous Galerkin
  approximations of fourth-order elliptic problems}.
\newblock {\em IMA Journal of Numerical Analysis}, 31(1):281--298, 09 2009.

\bibitem{GR86}
V.~Girault and P.-A. Raviart.
\newblock {\em Finite element methods for {N}avier--{S}tokes equations.Theory
  and algorithms}, volume~5 of {\em Springer Series in Computational
  Mathematics}.
\newblock Springer-Verlag, Berlin, 1986.

\bibitem{GWR04}
V.~Gravemeier, W.~A. Wall, and E.~Ramm.
\newblock A three-level finite element method for the instationary
  incompressible {N}avier-{S}tokes equations.
\newblock {\em Comput. Methods Appl. Mech. Engrg.}, 193(15-16):1323--1366,
  2004.

\bibitem{Guer99}
J.-L. Guermond.
\newblock Stabilization of {G}alerkin approximations of transport equations by
  subgrid modeling.
\newblock {\em M2AN Math. Model. Numer. Anal.}, 33(6):1293--1316, 1999.

\bibitem{Guz06}
J.~Guzm\'{a}n.
\newblock Local analysis of discontinuous {G}alerkin methods applied to
  singularly perturbed problems.
\newblock {\em J. Numer. Math.}, 14(1):41--56, 2006.

\bibitem{GSS17}
J.~Guzm\'{a}n, C.-W. Shu, and F.~A. Sequeira.
\newblock {$\rm H(div)$} conforming and {DG} methods for incompressible
  {E}uler's equations.
\newblock {\em IMA J. Numer. Anal.}, 37(4):1733--1771, 2017.

\bibitem{HH21}
Y.~Han and Y.~Hou.
\newblock Robust error analysis of {H}(div)-conforming {DG} method for the
  time-dependent incompressible {N}avier-{S}tokes equations.
\newblock {\em J. Comput. Appl. Math.}, 390:Paper No. 113365, 13, 2021.

\bibitem{HS90}
P.~Hansbo and A.~Szepessy.
\newblock A velocity-pressure streamline diffusion finite element method for
  the incompressible {N}avier-{S}tokes equations.
\newblock {\em Comput. Methods Appl. Mech. Engrg.}, 84(2):175--192, 1990.

\bibitem{HVABR18}
T.~Hoang, C.~V. Verhoosel, F.~Auricchio, E.~H. van Brummelen, and A.~Reali.
\newblock Skeleton-stabilized isogeometric analysis: high-regularity
  interior-penalty methods for incompressible viscous flow problems.
\newblock {\em Comput. Methods Appl. Mech. Engrg.}, 337:324--351, 2018.

\bibitem{HVQARB19}
T.~Hoang, C.~V. Verhoosel, C.-Z. Qin, F.~Auricchio, A.~Reali, and E.~H. van
  Brummelen.
\newblock Skeleton-stabilized immersogeometric analysis for incompressible
  viscous flow problems.
\newblock {\em Comput. Methods Appl. Mech. Engrg.}, 344:421--450, 2019.

\bibitem{HJ07}
J.~Hoffman and C.~Johnson.
\newblock {\em Computational turbulent incompressible flow}, volume~4 of {\em
  Applied Mathematics: Body and Soul}.
\newblock Springer, Berlin, 2007.

\bibitem{JWH00}
K.~E. Jansen, C.~H. Whiting, and G.~M. Hulbert.
\newblock A generalized-{$\alpha$} method for integrating the filtered
  {N}avier-{S}tokes equations with a stabilized finite element method.
\newblock volume 190, pages 305--319. 2000.
\newblock Fourth Japan-US Symposium on Finite Element Methods in Lage-scale
  Computational Fluid Dynamics (Tokyo, 1998).

\bibitem{JLMNR17}
V.~John, A.~Linke, C.~Merdon, M.~Neilan, and L.~G. Rebholz.
\newblock On the divergence constraint in mixed finite element methods for
  incompressible flows.
\newblock {\em SIAM Rev.}, 59(3):492--544, 2017.

\bibitem{JNP84}
C.~Johnson, U.~N\"{a}vert, and J.~Pitk\"{a}ranta.
\newblock Finite element methods for linear hyperbolic problems.
\newblock {\em Comput. Methods Appl. Mech. Engrg.}, 45(1-3):285--312, 1984.

\bibitem{JP86}
C.~Johnson and J.~Pitk\"{a}ranta.
\newblock An analysis of the discontinuous {G}alerkin method for a scalar
  hyperbolic equation.
\newblock {\em Math. Comp.}, 46(173):1--26, 1986.

\bibitem{JS86}
C.~Johnson and J.~Saranen.
\newblock Streamline diffusion methods for the incompressible {E}uler and
  {N}avier-{S}tokes equations.
\newblock {\em Math. Comp.}, 47(175):1--18, 1986.

\bibitem{JSW87}
C.~Johnson, A.~H. Schatz, and L.~B. Wahlbin.
\newblock Crosswind smear and pointwise errors in streamline diffusion finite
  element methods.
\newblock {\em Math. Comp.}, 49(179):25--38, 1987.

\bibitem{LLS18}
P.~L. Lederer, C.~Lehrenfeld, and J.~Sch\"{o}berl.
\newblock Hybrid discontinuous {G}alerkin methods with relaxed {$H({\rm
  div})$}-conformity for incompressible flows. {P}art {I}.
\newblock {\em SIAM J. Numer. Anal.}, 56(4):2070--2094, 2018.

\bibitem{LLS19}
P.~L. Lederer, C.~Lehrenfeld, and J.~Sch\"{o}berl.
\newblock Hybrid {D}iscontinuous {G}alerkin methods with relaxed {$H({\rm
  div})$}-conformity for incompressible flows. {P}art {II}.
\newblock {\em ESAIM Math. Model. Numer. Anal.}, 53(2):503--522, 2019.

\bibitem{MT15}
G.~Matthies and L.~Tobiska.
\newblock Local projection type stabilization applied to inf-sup stable
  discretizations of the {O}seen problem.
\newblock {\em IMA J. Number. Anal.}, 35:239--269, 2015.

\bibitem{MMGPS18}
G.~Mengaldo, R.~C. Moura, B.~Giralda, J.~Peir\'{o}, and S.~J. Sherwin.
\newblock Spatial eigensolution analysis of discontinuous {G}alerkin schemes
  with practical insights for under-resolved computations and implicit {LES}.
\newblock {\em Comput. \& Fluids}, 169:349--364, 2018.

\bibitem{MCSBS22}
R.~C. Moura, A.~Cassinelli, A.~F.~C. da~Silva, E.~Burman, and S.~J. Sherwin.
\newblock Gradient jump penalty stabilisation of spectral/{$hp$} element
  discretisation for under-resolved turbulence simulations.
\newblock {\em Comput. Methods Appl. Mech. Engrg.}, 388:Paper No. 114200, 29,
  2022.

\bibitem{NC18}
A.~Natale and C.~J. Cotter.
\newblock A variational {$H({\rm div})$} finite-element discretization approach
  for perfect incompressible fluids.
\newblock {\em IMA J. Numer. Anal.}, 38(3):1388--1419, 2018.

\bibitem{SLLL18}
P.~W. Schroeder, C.~Lehrenfeld, A.~Linke, and G.~Lube.
\newblock Towards computable flows and robust estimates for inf-sup stable
  {FEM} applied to the time-dependent incompressible {N}avier-{S}tokes
  equations.
\newblock {\em SeMA J.}, 75(4):629--653, 2018.

\bibitem{SL17}
P.~W. Schroeder and G.~Lube.
\newblock Pressure-robust analysis of divergence-free and conforming {FEM} for
  evolutionary incompressible {N}avier-{S}tokes flows.
\newblock {\em J. Numer. Math.}, 25(4):249--276, 2017.

\bibitem{SL18}
P.~W. Schroeder and G.~Lube.
\newblock Divergence-free {$H({\rm div})$}-{FEM} for time-dependent
  incompressible flows with applications to high {R}eynolds number vortex
  dynamics.
\newblock {\em J. Sci. Comput.}, 75(2):830--858, 2018.

\bibitem{tong2022skeletonstabilized}
G.~G. Tong, D.~Kamensky, and J.~A. Evans.
\newblock Skeleton-stabilized divergence-conforming b-spline discretizations
  for incompressible flow problems of high reynolds number.
\newblock {\em Computers \& Fluids}, 248:105667, 2022.

\end{thebibliography}

\end{document}